\newtheorem{theorem}{Theorem}
\newtheorem{definition}[theorem]{Definition}
\newtheorem{remark}[theorem]{Remark}
\newcommand{\dd}{\mathrm{d}}
\newcommand{\J}{\mathcal{J}}
\DeclareMathOperator*{\argmin}{\arg\!\min}
\title{Analysis of Seismic Inversion with Optimal Transportation and Softplus Encoding}
\author{
  Lingyun Qiu \\
  Yau Mathematical Sciences Center\\
  Tsinghua University \\
  Beijing China 100084 \\
  \texttt{lyqiu@tsinghua.edu.cn} \\  
  %% \AND
  %% Coauthor \\
  %% Affiliation \\
  %% Address \\
  %% \texttt{email} \\
  %% \And
  %% Coauthor \\
  %% Affiliation \\
  %% Address \\
  %% \texttt{email} \\
  %% \And
  %% Coauthor \\
  %% Affiliation \\
  %% Address \\
  %% \texttt{email} \\
}
\begin{document}
\maketitle

\begin{abstract}
  This paper is devoted to theoretical and numerical investigation of the local minimum issue in seismic full waveform inversion (FWI).
  This paper provides a mathematical analysis of optimal transportation (OT) type objective function's differentiability and proves that the gradient obtained in the adjoint-state method does not depend on the particular choice of the Kantorovich potentials. A novel approach using the softplus encoding method is presented to generalize and impose the OT metric on FWI. This approach improves the convexity of the objective function and mitigates the cycle-skipping problem. The effectiveness of the proposed method is demonstrated numerically on an inversion task with the benchmark Marmousi model.
\end{abstract}

% keywords can be removed
\keywords{Inverse problems \and full waveform inversion \and optimal transportation \and quadratic Wasserstein distance}

\section{Introduction}
Seismic full waveform inversion uses kinematic and dynamical information of the seismic wavefield to build the subsurface velocity model, which accurately depicts the geological structures.
 Mathematically, FWI is formulated as a nonlinear inverse problem matching modeled data to the recorded field data \cite{Tarantola1984}. 
 It can be solved as a PDE-constrained optimization problem, and a least-squares objective function is used for measuring the data misfit. 
 The objective function is minimized with respect to the model parameter, and the model update is computed using the adjoint state method \cite{Plessix2006}. FWI can produce high-resolution models of the subsurface compared to ray-based methods. However, FWI is often an ill-posed problem due to the band-limited nature of the seismic data and the limitations of the acquisition geometries. 
 
 The least-square formulation of FWI, when the initial model is far from the true model and the seismic data lack of low-frequency information, tends to produce many local minima. That is the so-called cycle-skipping issue. 
 The cause of this issue is that only the pointwise amplitude difference is measured with the $L_2$ norm, while the phase or travel-time information embedded in the data is more critical for the inversion. There are different approaches proposed to capture more accurate kinematic information, such as dynamic time warping and convolution-based methods. This information is then used for the optimization to convexify the objective function or enlarge the true solution valley. In this direction, we mention the works in \cite{Luo1991,VanLeeuwen2010,Luo2011,Ma2013,Warner2014}.
 An alternative approach to reshape the objective function is to extend the parameter space \cite{Symes2015,Symes2017} or use an auxiliary wavefield \cite{VanLeeuwen2013,Leeuwen2015,Wang2016} in a non-physical way so that the data can be easily fitted. Then, one can get the physical model back by using an annihilator or gradually tightening the PDE-constraints.
 
 Another approach involves the use of Wasserstein metrics. The Wasserstein distance and OT theory were first brought up to seek the optimal cost of rearranging one density into the other, where the transportation cost per unit mass is the Euclidean distance or Manhattan distance. It can be traced back to the mass transport problem proposed by Monge in 1780s and its relaxed formulation by Kantorovich in the 1940s. Since then, it has become a classical subject in probability theory, economics, computer vision, optimization, and partial differential equations. Recently, the Wasserstein distance and its variants are proposed to replace the distance for the objective function in FWI \cite{Engquist2014,Metivier2016,Qiu2017,Engquist2019,Metivier2019,Sun2019,Li2020,Engquist2020}. Its successful applications are partly due to its Lagrangian nature to capture the important variations of signals, such as translation(time-shift) and dilation.

Among various strategies for mitigating the cycle-skipping issue in seismic inversion, using OT-based objective functions has been demonstrated to be one of the most effective approaches. However, these approaches have three points that require further investigation. First of all, there is sufficient evidence that FWI has a strong path dependence. However, the Kantorovich potential generally does not have uniqueness, hence the associated model gradient obtained using the adjoint-state method is nor unique. Secondly, the inversion's effectiveness depends crucially on the appropriate underlying encoding method to transfer the seismogram to probability density functions(PDFs). Last but not least, while using OT metrics is mainly concerned with the cycle-skipping problem, it tends to give smooth inversion results and lacks high-resolution details. In order to delineate usage scenarios, frequency sensitivity analysis is crucial. 

Our goal of the present paper is to provide a rigorous description of the gradient-based methods and proper encoding method for the seismic inverse problem using OT. These findings are essential for a rigorous interpretation of the numerical observations. Several objectives are pursued in this paper. First, a rigorous proof is presented on the directional differentiability of the transportation cost as a function in $L_2$.  We also perform a frequency sensitivity analysis of the OT objective function using the Fourier series. Second, an encoding method using the softplus function is introduced, and then it is proved that the gradient obtained using the adjoint state method is well-defined and unique. Finally, by applying it to a simple convexity test and an inverse problem on the benchmark Marmousi model, the feasibility of the proposed method is demonstrated.
  
The paper is organized as follows. The necessary notations and properties of the quadratic Wasserstein distance, especially the efficient solution in the unidimensional case, are discussed in Section~\ref{sec:W2}. As shown in Section~\ref{sec:FWI}, one only needs to change the adjoint source when switching from $L_2$ to other metrics in the objective function, and an adjoint source involving the first Kantorovich potential and the gradient of the encoding map is used for the OT one. In section~\ref{sec:analysis}, we investigate the frequency sensitivity of OT and specify a low-frequency enhancement of it; a rigorous proof of the directional differentiability and uniqueness of the gradient is also presented. The desired properties of encoding methods and an effective approach using the softplus function are illustrated in Section~\ref{sec:encode}, and two numerical examples are shown in Section~\ref{sec:num}. For completeness and reproducibility of the results,  pseudo-code of the proposed method is presented in an appendix.

\section{The quadratic Wasserstein distance}
\label{sec:W2}
This section introduces the quadratic Wasserstein distance used to measure the difference between data. We begin with some standard notations and necessary properties.

\subsection{Notation}
Throughout this paper, we shall consider probability measures that are absolutely continuous with respect to the Lebesgue measure and with a finite moment of order $2$ on a simply connected and compact domain in $\mathbb{R}^n$. Hence, we identify the induced measure with its Radon-Nikodym derivative with respect to the Lebesgue measure and write $\dd \mu(x) = \mu(x) \dd x$. 
The measure and its Radon-Nikodym derivative will not be distinguished, as it should be clear from the context.
All the measures considered here are built from the solution to wave equations. The regularity condition is clearly satisfied, and the limited-time/space measurement of the data leads to the boundedness of the domain.
When no ambiguity arises, we denote for brevity by $\mathcal{P}$ the set of all absolutely continuous measures with a finite moment of order $2$ on the given domain. Whenever $T$ is a map from a measure space $X$, equipped with a measure $\mu$, to an arbitrary space Y, we denote by $T\#\mu$ the push-forward of $\mu$ by $T$.
Explicitly,
$$
T\#\mu(A) = \mu(T^{-1}(A)), \quad  \mbox{with } T^{-1}(A) = \{x\in X \mid T(x) \in A).
$$
For non-negative functions $f$ and $g$, we write $T\#f=g$ to mean that the measure having Lebesgue density $f$ is pushed forward to the measure having Lebesgue density $g$.

Afterward, three forms for OT are introduced: Monge's problem, Kantorovich's formulation, and its dual form. Actually, in this paper's setting, there exists a unique solution to Kantorovich's problem, which is also a unique solution to Monge's problem. Even so, from a numerical point of view, it is beneficial to switch among these formulations.

\begin{definition}[Monge's OT problem]\label{def:Monge}
Let $\mu, \nu \in \mathcal{P}$. Minimize
\begin{equation}
  I[T] = \int |x - T(x)|^2 \, \dd \mu(x)
\end{equation}
over the set of all measurable maps $T$ such that $T\# \mu = \nu$.
\end{definition}

\begin{definition}[Kantorovich's OT problem]
Let $\mu, \nu \in \mathcal{P}$. Minimize
\begin{equation}\label{K-problem}
  I[\gamma] = \int \frac{1}{2}|x - y|^2 \, \dd \gamma(x,y)
\end{equation}
over the set of all coupling measures, which admit $\mu$ and $\nu$ as marginals on the first and second factors respectively, i.e.,
\begin{equation}
  \int \left(\varphi(x) + \psi(y) \right) \dd \gamma(x,y) = \int \varphi \dd \mu + \int \psi \dd \nu ,
\end{equation}
for all measurable functions $\varphi \in L^1(\dd \mu)$ and $\psi \in L^1(\dd \nu)$.
\end{definition}
Monge's formulation leads to a highly nonlinear minimization problem with nonlinear constraints, while the Kantorovich's formulation revisits the same problem from a linear programming point of view. It leads to a linear minimization under convex constraints. Kantorovich 's problem, thus, admits a duality interpretation, which turns out to be a powerful tool in the OT theory.

\begin{theorem}[Kantorovich duality]\label{thm:dual}
The minimum of Kantorovich's problem (\ref{K-problem}) is equal to the supremum of
\begin{equation}\label{dual-form}
  \int \varphi \, \dd \mu + \int \psi  \, \dd \nu
\end{equation}
over all pairs  $(\varphi, \psi) \in L^1(\dd \mu) \times L^1(\dd \nu)$ such that $\varphi(x) + \psi(y) \leq \frac{1}{2}|x-y|^2$.
\end{theorem}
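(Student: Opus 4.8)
The inequality ``supremum $\le$ minimum'' is the easy, weak-duality direction: given any coupling $\gamma$ with marginals $\mu$ and $\nu$ and any admissible pair $(\varphi,\psi)$, integrating the pointwise constraint $\varphi(x)+\psi(y)\le\tfrac12|x-y|^2$ against $\gamma$ and using the two marginal identities yields $\int\varphi\,\dd\mu+\int\psi\,\dd\nu\le\int\tfrac12|x-y|^2\,\dd\gamma$. Taking the supremum on the left over admissible pairs (including all of $L^1(\dd\mu)\times L^1(\dd\nu)$) and the infimum on the right over couplings settles this direction and shows the dual value is finite, at most $\tfrac12\operatorname{diam}(\Omega)^{2}$ since the domain $\Omega$ is compact. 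It remains to prove the reverse inequality; and since weak duality already holds for all $L^1$ pairs, it suffices to produce \emph{continuous} pairs whose value reaches the primal minimum, after which the value over $L^1$ pairs is squeezed to the same number.

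For the no-gap step the plan is to invoke the Fenchel--Rockafellar duality theorem on the Banach space $E=C(\Omega\times\Omega)$ --- here $C_b=C$ because $\Omega$ is compact, and the cost $c(x,y):=\tfrac12|x-y|^2$ is bounded and continuous. Define two convex functionals on $E$:
\[
\Theta(u)=\begin{cases}0,& u(x,y)\ge -c(x,y)\text{ for all }x,y,\\ +\infty,&\text{otherwise},\end{cases}
\qquad
\Xi(u)=\begin{cases}\int\varphi\,\dd\mu+\int\psi\,\dd\nu,& u(x,y)=\varphi(x)+\psi(y),\\ +\infty,&\text{otherwise},\end{cases}
\]
where $\Xi$ is well defined because $\mu,\nu$ are probability measures, so the ambiguity $(\varphi,\psi)\mapsto(\varphi+a,\psi-a)$ does not change the value. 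After the substitution $(\varphi,\psi)\mapsto(-\varphi,-\psi)$ one checks that $\inf_{u\in E}\{\Theta(u)+\Xi(u)\}$ equals the negative of the supremum in (\ref{dual-form}). The hypothesis of the theorem holds at the constant $u_{0}\equiv 1$: since $c\ge0$, $u_{0}$ is interior to $\{u\ge -c\}$, so $\Theta$ vanishes (hence is continuous) near $u_{0}$, while $\Xi(u_{0})=1<\infty$. The theorem then gives
\[
\inf_{u\in E}\{\Theta(u)+\Xi(u)\}=\sup_{\pi\in E^{\ast}}\{-\Theta^{\ast}(-\pi)-\Xi^{\ast}(\pi)\},
\]
where by the Riesz representation theorem $E^{\ast}$ is the space of signed Radon measures on $\Omega\times\Omega$.

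The remaining task is to evaluate the two Legendre transforms. A short computation gives $\Theta^{\ast}(-\pi)=\int c\,\dd\pi$ when $\pi\ge0$ and $+\infty$ otherwise: testing against $u=-c$ yields the value $\int c\,\dd\pi$, while testing against $u=-c+t g$ with $g\ge0$ continuous and $t\to\infty$ forces $+\infty$ whenever $\pi$ has a nontrivial negative part (inner regularity of $\pi$ produces such a $g$). Similarly $\Xi^{\ast}(\pi)=0$ when the marginals of $\pi$ are $\mu$ and $\nu$, and $+\infty$ otherwise, obtained by splitting the supremum over $\varphi$ and over $\psi$ and noting that $\sup_{\varphi}\int\varphi\,\dd(\pi_{X}-\mu)$ equals $0$ or $+\infty$ according as the first marginal $\pi_{X}$ equals $\mu$ or not. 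Substituting, the right-hand side of the displayed identity becomes $-\inf\{\int c\,\dd\pi:\pi\ge0,\ \pi\text{ has marginals }\mu\text{ and }\nu\}$, that is, the negative of the minimum in (\ref{K-problem}); comparing with the left-hand side proves the equality. As a by-product the Kantorovich minimum is attained --- which also follows directly from weak-$\ast$ compactness of the set of couplings with the prescribed marginals together with weak-$\ast$ continuity of $\gamma\mapsto\int c\,\dd\gamma$ on the compact domain.

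I expect the main obstacle to be precisely this no-gap step, and inside it the two verifications just mentioned: (i) the constraint qualification --- continuity of $\Theta$ at a feasible point --- which is where nonnegativity of $c$ and compactness of $\Omega$ are used; and (ii) identifying $\Theta^{\ast}$ with the nonnegativity constraint plus the linear cost and $\Xi^{\ast}$ with the marginal constraints, the place where the probability normalization of $\mu$ and $\nu$ enters essentially. A route avoiding Fenchel--Rockafellar is a direct minimax argument: write the dual value as $\sup_{\varphi,\psi}\inf_{\lambda\ge0}$ of the Lagrangian in which $\lambda\ge0$ is a Radon-measure multiplier for the pointwise constraint, then exchange $\sup$ and $\inf$ by a minimax theorem justified by weak-$\ast$ compactness of the coupling set; there the crux is controlling that exchange, which is the same difficulty in another guise.
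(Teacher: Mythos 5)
The paper does not prove this statement at all: Kantorovich duality is quoted as a classical result (the background here is deferred to the optimal-transport literature, e.g.\ Villani and Santambrogio), so there is no in-paper argument to compare against. Your sketch is the standard textbook proof (Fenchel--Rockafellar on $C(\Omega\times\Omega)$, as in Villani's \emph{Topics in Optimal Transportation}, Theorem~1.3) and it is sound in the present setting: compactness of the domain makes $c=\tfrac12|x-y|^2$ bounded and continuous, the constraint qualification at $u_0\equiv 1$ is correct because $c\ge 0$, the well-definedness of $\Xi$ uses exactly the probability normalization you point to, and the two Legendre-transform computations are the right ones. The squeeze from continuous pairs to $L^1$ pairs via weak duality is also the correct way to reconcile the function class in the statement with the class produced by the duality machinery. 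One small imprecision: you say it suffices to ``produce continuous pairs whose value reaches the primal minimum,'' but Fenchel--Rockafellar guarantees attainment only on the measure side (whence the \emph{minimum} in the primal is attained); the supremum over continuous pairs need not be attained, and your argument does not need it to be --- equality of values suffices for the squeeze. With that wording tightened, the proposal is a correct and complete outline.
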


In the setting of this article, Monge's problem and Kantorovich's formulation both have solutions and reach the same optimal value. That is, the supremum in Theorem~\ref{thm:dual} and the infimum in Definition~\ref{def:Monge} are equal and both attainable. Moreover, for quadratic cost $c(x,y)=\frac{1}{2}|x-y|^2$, a transference plan is optimal if and only if it is concentrated on the sub-differential of a convex function and such a plan is $\dd \mu$-unique \cite{Villani2008}, and the relationship between the maximizing pair $(\phi, \psi)$ and the optimal transference plan $T$ is
\begin{equation}\label{plan-dual-relation}
       \psi(y) =  \min_x \frac{1}{2} |x-y|^2 -\varphi(x), \quad
     T(x) = \nabla \left(\frac{|x|^2}{2} -\varphi(x)\right).
\end{equation}
This formulation, especially the second equality, turns out to be a handy tool in the calculation of the first variation from the numerical point of view.
The Wasserstein distance between $\mu$ and $\nu$ is defined as
\begin{equation}\label{def:W2}
  W_2(\mu, \nu) = \min_T I[T]^{1/2} = \min_\gamma I[\gamma]^{1/2}.
\end{equation}
For simplicity, we consider the second power of $W_2$, which is the optimal total transportation cost $\mathcal{T}(\mu, \nu) = W_2^2(\mu, \nu)$.

\subsection{OT on the real line}
The one-dimensional case is of particular interest, as its equivalent definition does not involve solving a minimization problem. It can be solved explicitly and efficiently with a linear computational complexity from a computational point of view. From a theoretical point of view, the $1D$ Wasserstein distance, as a function of its first argument, is strongly convex along the geodesic. All high-dimensional ones are not even convex along the geodesic(see e.g. \cite[Example $9.1.5$]{Ambrosio2008}). From the perspective of the seismic inverse problem, this leads us to consider using the OT metric on the time variable combined with the least-squares on the spatial variable, rather than the high-dimensional Wasserstein distances on space-time variable.

The following theorem states a solution to the Monge-Kantorovich problem on the real line in terms of cumulative distribution functions.

\begin{theorem}[OT theorem on $\mathbb{R}$ \cite{Villani2003,Santambrogio2015}]
\label{thm:w2-1d}
Let $p_0, p_1\in \mathcal{P}(\mathbb{R})$ be two probability measures on the real line. $f_0$ and $f_1$ are their cumulative distribution functions:
\[
f_k(x) = \int_{-\infty}^x \, \dd p_k, \quad k = 0,1.
\]
The pseudo-inverse of a non-decreasing and right-continuous function $f$ is defined by
\[
f^{[-1]}(x) = \inf \{t\in \mathbb{R} \mid f(t) > x \}.
\]

Then, there exists a unique non-decreasing map $T:\mathbb{R}\rightarrow \mathbb{R}$ given by $T(x) = f_0^{[-1]}(f_1(x))$ such that $p_0(T(x)) =p_1(x)$. The map $T$ is optimal in the Monge-Kantorovich problem for the quadratic cost function. Moreover, the value of the optimal transport cost is
\begin{eqnarray}
  W_2(p_0,p_1) & = \left( \int_0^1 (f_0^{[-1]}(s) - f_1^{[-1]}(s))^2 \dd s \right)^{1/2} \nonumber
  \\
   & = \left( \int_{-\infty}^{+\infty} (f_0^{[-1]}(f_1(t)) - t)^2 p_1(t) \dd t  \right)^{1/2}.
\end{eqnarray}
\end{theorem}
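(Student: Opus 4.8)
The plan is to reduce the theorem to a few elementary properties of the generalized inverse and then to read off optimality from the characterization of optimal plans for the quadratic cost already recalled above (the one accompanying \eqref{plan-dual-relation}). First I would establish the standard facts about $f^{[-1]}$: for a non-decreasing, right-continuous $f$ the map $f^{[-1]}$ is again non-decreasing and right-continuous, and the sets $\{s : s < f(t)\}$ and $\{s : f^{[-1]}(s)\le t\}$ differ by at most the single point $f(t)$; hence, for $U$ uniform on $(0,1)$, $\mathbb{P}(f^{[-1]}(U)\le t)=f(t)$, i.e.\ $f^{[-1]}$ pushes the Lebesgue measure on $(0,1)$ forward to the measure with cumulative distribution function $f$. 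Since $p_0,p_1$ are absolutely continuous, $f_0,f_1$ are continuous, so conversely $f_k$ pushes $p_k$ forward to the Lebesgue measure on $(0,1)$ (the probability integral transform) and $f_k\circ f_k^{[-1]}=\mathrm{id}$ Lebesgue-a.e.\ on $(0,1)$. These are routine but must be handled with care about right-continuity, the absence of atoms, flat parts of $f_k$, and the single null point where the two sets above fail to coincide.

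Next, with $T=f_0^{[-1]}\circ f_1$, monotonicity is clear (a composition of non-decreasing maps) and
\[
  T\# p_1 \;=\; f_0^{[-1]}\#\bigl(f_1\# p_1\bigr)\;=\;f_0^{[-1]}\#\bigl(\text{Lebesgue measure on }(0,1)\bigr)\;=\;p_0 ,
\]
which is the asserted relation $p_0(T(x))=p_1(x)$ read as an identity of pushed-forward measures. For optimality I would use that the graph of a non-decreasing function is a monotone, hence cyclically monotone, subset of $\mathbb{R}\times\mathbb{R}$, so by Rockafellar's theorem it lies in the subdifferential of a convex function; by the criterion recalled before the theorem, the plan $(\mathrm{id},T)\# p_1$ is then optimal for the cost $\tfrac{1}{2}|x-y|^2$, so $T$ attains $W_2(p_0,p_1)$. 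Uniqueness among non-decreasing admissible maps follows because that same criterion gives $\dd\mu$-uniqueness of the optimal plan: any other non-decreasing map sending $p_1$ to $p_0$ induces, by the same argument, an optimal plan, which must coincide with $(\mathrm{id},T)\# p_1$, forcing agreement $p_1$-a.e. (Alternatively, monotonicity of such a map $S$ makes $\{S\le y\}$ an interval whose $p_1$-measure is forced to equal $f_0(y)$ for every $y$, which determines $S$ up to a $p_1$-null set.)

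Finally, the value formula: by the previous step and \eqref{def:W2},
\[
  W_2^2(p_0,p_1)=\int_{-\infty}^{+\infty}\bigl(f_0^{[-1]}(f_1(t))-t\bigr)^2\,p_1(t)\,\dd t ,
\]
which is the second displayed expression, and the change of variables $s=f_1(t)$ (so $\dd s=p_1(t)\,\dd t$ and $t=f_1^{[-1]}(s)$), together with $f_1\circ f_1^{[-1]}=\mathrm{id}$ a.e.\ from the first step, turns it into $\int_0^1(f_0^{[-1]}(s)-f_1^{[-1]}(s))^2\,\dd s$. The only step carrying real content is optimality: quoting the convex-subdifferential criterion the paper has already introduced makes it immediate, whereas a self-contained proof would instead expand $|x-y|^2$, reduce minimizing $\int|x-y|^2\,\dd\gamma$ over couplings to maximizing $\int xy\,\dd\gamma$, and establish $\int xy\,\dd\gamma\le\int_0^1 f_0^{[-1]}(s)f_1^{[-1]}(s)\,\dd s$ (with equality for the monotone coupling) by a layer-cake/Fubini computation whose sign bookkeeping is the only real fuss; the remaining ingredients are elementary measure theory and a change of variables.
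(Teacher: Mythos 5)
The paper does not prove Theorem~\ref{thm:w2-1d} at all: it is quoted verbatim from the literature with the citations \cite{Villani2003,Santambrogio2015}, so there is no in-paper argument to compare against. Judged on its own, your sketch is the standard textbook proof and is essentially correct: the pseudo-inverse facts (the two sets differing only at $s=f(t)$, hence $f^{[-1]}\#\mathrm{Leb}|_{(0,1)}$ has c.d.f.\ $f$, and $f_1\#p_1=\mathrm{Leb}|_{(0,1)}$ by absolute continuity) give $T\#p_1=p_0$; optimality via monotonicity of the graph $\Rightarrow$ cyclical monotonicity $\Rightarrow$ Rockafellar $\Rightarrow$ the subdifferential criterion the paper recalls before \eqref{plan-dual-relation} is exactly the efficient route; and uniqueness plus the value formula follow as you say. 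Two small points of care: uniqueness as stated can only be $p_1$-a.e.\ (your interval argument handles this correctly), and in the change of variables $s=f_1(t)$ the identity you actually need is $f_1^{[-1]}(f_1(t))=t$ for $p_1$-a.e.\ $t$ (true because flat parts of $f_1$ are $p_1$-null), not the composition $f_1\circ f_1^{[-1]}=\mathrm{id}$ that you cite from your first step; both hold a.e., but it is the former that justifies replacing $f_1^{[-1]}(s)$ by $t$. Neither issue is a genuine gap.
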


\begin{remark}
There are several aspects to be mentioned here regarding the optimal transport map, $T$. First, if the two measures are atomless and strictly positive, and hence, the cumulative distribution functions are continuous and strictly monotone, then one would have
\[
T= f_0^{-1} \circ f_1.
\]
Second, from the explicit form of $T$, we conclude that the regularity of $T$ is one degree higher than that of the measures. Higher regularity leads to a smoother effect. This has been observed in the numerical experiments.
Third, the form of $T$ implies that the solution to the transportation problem is given by a monotone rearrangement of $p_0$ onto $p_1$. This leads to the algorithm with computational cost $\mathrm{O}(N)$ for computing the transportation cost and its first variation. Please refer to the Appendix for more details.
Fourth, the transportation map $T$ is optimal not only for the quadratic cost, but also for all cost functions in the form of $c(x,y) = h(y-x)$ with $h$ being a convex function. In particular, the optimal transportation cost associated with the cost function $c(x,y) = |x-y|$ is
\[
W_1(p_0,p_1) = \int_0^1 \left|f_0^{[-1]}(s) - f_1^{[-1]}(s)\right|\, \dd s = \int_{-\infty}^{+\infty} \left|f_0(x) - f_1(x)\right|\, \dd x.
\]
Finally, the first variation of the transportation cost is given by
\begin{multline}\label{tran-deri}
    \frac{\partial W_2^2(p_0,p_1)}{\partial p_0} = \\
 (f_1^{[-1]}(f_0(t)) -t)^2
+ \int_t^1 2 \left.\frac{\partial f_1^{[-1]}(x)}{\partial x} \right|_{x=f_0(s)} \left(f_1^{[-1]}(f_0(s)) -s\right) p_0(s) \, \dd s.
\end{multline}
To simplify the calculation and avoid the differentiation, which may cause some numerical error, the second term in the above formula can be rewritten as
\begin{equation}
  2 \int_{f_1^{[-1]}(f_0(t))}^{1} \left(s - f_0^{[-1]}(f_1(s))\right) \, \dd s.
\end{equation}
Here the inverse function theorem is applied.

\end{remark}

\section{Full waveform inversion}
\label{sec:FWI}
In this section, we first briefly review the theory of FWI and the adjoint state method. Then, an analog of the adjoint wavefield using transportation distance is developed. The differentiability and uniqueness will be analyzed in subsequent sections.
In Section~\ref{sec:num}, it will be used in conjunction with the softplus encoding method to perform numerical experiments.

We start with the acoustic wave equation in the time domain governed by
\begin{equation}\label{eq:wave}
  \left( m(x) \frac{\partial^2}{\partial t^2} - \nabla \cdot\left(\frac{1}{\rho(x)}\nabla\right) \right) u(x,t) = f(x,t),
\end{equation}
where $m$ is the reciprocal of the bulk modulus, $\rho$ is the density, and $u$ and $f$ stand for the pressure wavefield and source term, respectively. We symbolize the relationship between the model parameters and the observed wavefield by an operator $F$, which is also referred to as the forward operator,
\begin{equation}\label{eq:fwd}
F(m,\rho,f) = u|_\Gamma.
\end{equation}
$\Gamma$ stands for the receiver geometry, which is usually a portion of a surface or a collection of discrete points.

The goal of the inverse problem is to reconstruct the model parameters from the measured data. Usually, the inverse problem is posed as a nonlinear least-squares optimization problem,
\begin{equation}\label{eq:optim}
  \min_{m, \rho, f} \J(m) = \frac{1}{2}\|F(m,\rho,f) - d\|_2^2,
\end{equation}
where $\J$ is the misfit function and $\|\cdot\|_2$ is the $L_2$ norm. That is, to choose the model parameters such that the correspondingly simulated waveform yields the minimum difference away from the measured data in the $L_2$ sense.
For simplicity, we assume the density $\rho$ and source term $f$ are known in this work. Hence we omit the explicit dependence on $\rho, f$ in (\ref{eq:fwd}) and (\ref{eq:optim}) in the following sections.

\subsection{Adjoint state method}
Modern techniques for the seismic inverse problems involve the use of data with size ranging from gigabytes to terabytes or even petabytes.
The adjoint state method plays a significant role in the computational aspect of large scale optimization problems. For completeness, a simple description is included here in a general setting. For more details on this topic, please refer to \cite{Plessix2006}.

Suppose the misfit function is $\J(u(m))$, where $u$ and $m$ stand for the state variable and model parameter, respectively. $u$ and $m$ satisfy the state equation $\Phi(m,u(m)) = 0$. For the gradient-based method, the total derivative ${\delta \J}/{\delta m}$ needs to be computed to assess the sensitivity of the misfit function to the model parameter.
The gradient ${\delta \J}/{\delta m}$ is simply
\begin{equation}
  \frac{\delta \J}{\delta m} = \left\langle \frac{\delta \J}{\delta u} \, , \frac{\delta u}{\delta m} \right\rangle ,
\end{equation}
where the inner product acts in the space of $u$, and ${\delta u}/{\delta m}$ is a linear operator acting on perturbations on $m$ and returning perturbations on $u$.
In the context of FWI, the difficulty of numerically evaluating ${\delta \J}/{\delta m}$  lies in the evaluation of the wavefield perturbation $\delta u$ for all possible model perturbation $\delta m$.
The adjoint state method answers the question, ``How to efficiently calculate ${\delta \J}/{\delta m}$ without evaluating ${\delta u}/{ \delta m}$ explicitly?"
 
Let us define the adjoint state variable $v$ as the solution of the adjoint state equation,
\begin{equation}
\label{eq:adjoint-abstract}
\left(\frac{\partial \Phi}{\partial u} \right)^* v = \frac{\delta \J}{\delta u}.
\end{equation}
From the state equation, we know that 
\begin{equation}
\label{eq:state-eq-rela}
\frac{\partial \Phi}{\partial u} \frac{\delta u}{\delta m} + \frac{\partial \Phi}{\partial m} =0.
\end{equation}
It follows that
\begin{eqnarray}
\frac{\delta \J}{\delta m} & = & \left\langle \frac{\delta \J}{\delta u} \, , \frac{\delta u}{\delta m} \right\rangle \nonumber \\
& = & \left\langle \left(\frac{\partial \Phi}{\partial u}\right)^* v \, , \frac{\delta u}{\delta m} \right\rangle  \\
& = & \left\langle v \, , \frac{\partial \Phi}{\partial u} \, \frac{\delta u}{\delta m} \right
\rangle \nonumber \\
& = & \left\langle v \, , -\frac{\partial \Phi}{\partial m} \right \rangle . \nonumber
\end{eqnarray}
In the above identities, we omit the explicit dependence of the inner products on the associated spaces for simplicity. Indeed, from this formulation, one observes that as long as the wavefield $u$ is still the intermediate in the construction of the data misfit function $\J$, only the adjoint state variable $v$ depends on the specific form of $\J$. Furthermore, only the adjoint source term $ \frac{\delta \J}{\delta u}$ needs to be modified for different misfit functions as long as it is of the form $\J=\J(u(m))$.

In the conventional FWI with least-square misfit function
\[
\J = \frac{1}{2}\|u - d\|_2^2, \quad u = F(m),
\]
we have that
\[
\frac{\delta \J}{\delta u} = u - d.
\]
Applying the adjoint state method gives
\begin{equation}
\label{eq:L2-adj-stat}
\frac{\delta \J}{\delta m} = \left\langle v \, , -\frac{\partial \Phi}{\partial m} \right \rangle ,
\end{equation}
where the adjoint state variable $v$ solves the adjoint state equation
\begin{equation}
\left\{
\begin{array}{>{\displaystyle}r>{\displaystyle}l}
\left( m(x) \frac{\partial_2}{\partial t^2} - \nabla \cdot\left(\frac{1}{\rho(x)}\nabla\right) \right) v(x,t) &  =  u - d , \\
 v(x,T)  & =  0, \\
 \partial_t v(x,T) & =  0 .
 \end{array}
\right.
\end{equation}

For the FWI with quadratic Wasserstein norm and proper encoding, the data misfit function is defined as
\begin{equation}
\label{obj-W2}
\J = W_2^2(\tilde{u},\tilde{d}), \quad u = F(m), \, \tilde{u} = \mathcal{D}(u), \, \tilde{d} = \mathcal{D}(d),
\end{equation}
where $\mathcal{D}$ is the encoding operation from seismic data to equal-mass non-negative measures.
It follows that
\begin{equation}
\frac{\delta \J}{\delta u} = \left\langle \frac{\dd W_2^2(\tilde{u},\tilde{d}) }{\dd \tilde{u}}  ,\, \frac{\dd \mathcal{D}(u)}{\dd u} \right \rangle = \mathcal{D}'[u]^* (\varphi),
\end{equation}
where $\varphi$ is the Kantorovich potential of $W_2^2(\tilde{u},\tilde{d})$ associated with $\tilde{u}$.
Then, applying the adjoint state method, we obtain that
\begin{equation}
\label{eq:W2-adj-stat}
\frac{\dd \J}{\dd m} = \left\langle v \, , -\frac{\partial \Phi}{\partial m} \right \rangle ,
\end{equation}
where the adjoint state variable $v$ solves the adjoint state equation
\begin{equation}
\label{eq:adjoint-eq-W2}
\left\{
\begin{array}{>{\displaystyle}r>{\displaystyle}l}
\left( m(x) \frac{\partial^2}{\partial t^2} - \nabla \cdot\left(\frac{1}{\rho(x)}\nabla\right) \right) v(x,t) &  = \mathcal{D}'[u]^* (\varphi) , \\
 v(x,T)  & =  0, \\
 \partial_t v(x,T) & =  0 .
 \end{array}
\right.
\end{equation}

\section{Wasserstein metric from a seismic inverse problem perspective}
\label{sec:analysis}
In this section, we discuss the features of the quadratic Wasserstein metric from a seismic inverse problem perspective. We start by investigating the frequency sensitivity of $\mathcal{T}$. It is proved that $\mathcal{T}$ emphasizes the low-frequency components not only locally in the linearization regime but also a global sense. 
This also reveals that the sensitivity of the solution is small in highly oscillating data.
Next, we present the rigorous definition of a set, says $\mathcal{D}$, in which the optimization is performed. We show the Euclidean differentiability of the transportation cost, and that the gradient is unique up to an additive constant for any element in $\mathcal{D}$. This set will be used as a desirable image domain to design the encoding mapping. 

\subsection{Frequency sensitivity of \texorpdfstring{$W_2$}{W2}}
A long-standing view in seismic inversion starts with low-frequency data, which contain large-scale, kinematically relevant components of the velocity model. The low-to-high frequency-continuation schemes \cite{Bao2015,Hoop2015,Bunks1995a,Virieux2009,Fichtner2013} help FWI mitigate the cycle-skipping issue, i.e., the local minimum problem. At the same time, an overly detailed frequency division will slow down the entire inversion process significantly. As is well known, the quadratic Wasserstein distance $W_2(\mu,\cdot)$ is asymptotically equivalent to a weighted $\dot{H}^{-1}(\dd \mu)$, where $\dot{H}^{-1}$ denotes the dual space of the space of zero-mean $H^1$ function. It is also well known that $L_2$ measures different frequency components equally, and $\dot{H}^{-1}$ attenuates them with a polynomial weight of order $|k|^{-1}$. 
The following theorem shows a non-asymptotically similar behavior of $W_2$ and $\dot{H}^{-1}$.
\begin{theorem}\label{thm:freq-sens}
  Assume that $\mu_0, \mu_1 \in \mathcal{P}(S^1)$, where $S^1$ stands for the unit circle, and 
  \begin{equation}
     \mu_1 = \mu_0  + \sum_{k\in \mathbb{Z}^+} \left(a_k \cos{(k\theta)} + b_k \sin{(k\theta)}\right) .
  \end{equation}
  Note that the $0$-frequency amplitude vanishes since $\int \dd \mu_0 = \int \dd \mu_1$.
  If 
  \begin{equation}
    \nu = \mu_0  - \sum_{k\in \mathbb{Z}^+} \left((a_k \cos)^{-}{(k\theta)} + (b_k \sin)^{-}{(k\theta)}\right),
  \end{equation}
  is a non-negative measure on $S^1$,
  then
  \begin{equation}
    W_2^2(\mu_0,\mu_1) \leq \sum_{k\in \mathbb{Z}^+} \frac{2\pi^2}{k^2} \left(|a_k|+|b_k|\right).
  \end{equation}
  Here, $f^{-}$ stands for the negative part of the Radon measure $f$.
\end{theorem}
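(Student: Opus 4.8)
The plan is to build an explicit transference plan from $\mu_0$ to $\mu_1$ assembled frequency-by-frequency and to estimate its quadratic cost. For $k\ge1$ write $g_k(\theta)=a_k\cos(k\theta)$, $\tilde g_k(\theta)=b_k\sin(k\theta)$, and denote by $g_k^{\pm},\tilde g_k^{\pm}$ their positive and negative parts as finite measures; since $\tilde g_k$ is $g_k$ rotated by $\pi/(2k)$, it suffices to analyse the cosine modes. The hypothesis that $\nu=\mu_0-\sum_{k\ge1}\bigl((a_k\cos)^{-}(k\theta)+(b_k\sin)^{-}(k\theta)\bigr)$ is a non-negative finite measure is exactly the statement that $\mu_0=\nu+\sum_{k\ge1}\bigl(g_k^{-}+\tilde g_k^{-}\bigr)$ is a decomposition of $\mu_0$ into non-negative measures; as $\mu_0(S^1)=1$ and $\|g_k^{-}\|=2|a_k|$, $\|\tilde g_k^{-}\|=2|b_k|$ (a one-line computation, independent of $k$), this forces $\sum_k(|a_k|+|b_k|)\le\tfrac12$. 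Hence the Fourier series of $\mu_1-\mu_0$ converges absolutely, each mode has zero mean so $\|g_k^{+}\|=\|g_k^{-}\|$ (likewise for $\tilde g_k$), and rearranging gives $\mu_1=\nu+\sum_{k\ge1}\bigl(g_k^{+}+\tilde g_k^{+}\bigr)$ as well.

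Next I would glue couplings. Let $\gamma_\nu$ be the coupling of $\nu$ with itself concentrated on the diagonal, and let $\gamma_k$ (resp.\ $\tilde\gamma_k$) be \emph{any} coupling of $g_k^{-}$ with $g_k^{+}$ (resp.\ of $\tilde g_k^{-}$ with $\tilde g_k^{+}$). Then $\gamma:=\gamma_\nu+\sum_{k\ge1}\bigl(\gamma_k+\tilde\gamma_k\bigr)$ has first marginal $\mu_0$ and second marginal $\mu_1$, so by the definition of $W_2^2$ as the minimal quadratic transportation cost,
\[
  W_2^2(\mu_0,\mu_1)\ \le\ \int_{S^1\times S^1}|x-y|^2\,\dd\gamma\ =\ \sum_{k\ge1}\Bigl(\int|x-y|^2\,\dd\gamma_k+\int|x-y|^2\,\dd\tilde\gamma_k\Bigr),
\]
the diagonal part contributing nothing. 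The estimate will then follow if each $\gamma_k$ can be chosen with support inside $\{(x,y):|x-y|\le\pi/k\}$: in that case $\int|x-y|^2\,\dd\gamma_k\le(\pi/k)^2\,\|g_k^{-}\|=2\pi^2|a_k|/k^2$, similarly $\int|x-y|^2\,\dd\tilde\gamma_k\le2\pi^2|b_k|/k^2$, and summing yields precisely $\sum_{k}\frac{2\pi^2}{k^2}(|a_k|+|b_k|)$.

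So the only genuine step is to construct, for each $k$, a coupling of $g_k^{-}$ with $g_k^{+}$ that transports no mass farther than $\pi/k$. The supports $\{g_k>0\}$ and $\{g_k<0\}$ consist of $2k$ alternating open arcs, each of arc-length $\pi/k$, tiling $S^1$. I would split every negative arc at its midpoint and pair each resulting half-arc with the neighbouring half of the adjacent positive arc; because $|\cos(k\theta)|$ is symmetric about the midpoint of every arc, the two members of each pair carry the same mass $|a_k|/k$, so they can be matched by an arbitrary coupling (e.g.\ the monotone rearrangement of Theorem~\ref{thm:w2-1d}), and the $2k$ pairs together exhaust $g_k^{-}$ and $g_k^{+}$. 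Each such pair sits inside a single arc of length $\pi/k$, so every unit of transported mass moves a distance at most $\pi/k$ in the arc-length metric on $S^1$ — and hence also in the chordal metric, which is dominated by it — as required.

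I expect the main obstacle to be purely organisational: one must arrange the pairing of half-arcs so that masses balance \emph{and} no pair straddles more than one arc-length. The obvious alternative of pairing each whole negative arc with a whole neighbouring positive arc also balances masses, but only bounds displacements by $2\pi/k$, which loses the constant. A secondary, technical point — the convergence and measurability of the infinite sums of measures appearing above — is already settled by the hypothesis that $\nu$ is a non-negative finite measure; alternatively one may first establish the estimate when $\mu_1-\mu_0$ is a trigonometric polynomial and then pass to the limit using lower semicontinuity of $W_2$.
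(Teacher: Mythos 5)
Your proposal is correct and follows essentially the same route as the paper: both bound $W_2^2(\mu_0,\mu_1)$ by an explicit admissible plan that leaves the common mass $\nu$ on the diagonal and transports each mode's mass $2(|a_k|+|b_k|)$ a distance at most $\pi/k$, giving the cost $\frac{2\pi^2}{k^2}(|a_k|+|b_k|)$ per mode. The only difference is the per-mode coupling: where you split each arc at its midpoint and pair neighbouring half-arcs, the paper simply pushes $(a_k\cos)^{+}(k\theta)$ forward by the rotation $\theta\mapsto\theta+\pi/k$, which lands exactly on the negative part because $\cos(k\theta+\pi)=-\cos(k\theta)$, thereby sidestepping the bookkeeping you identified as the main organisational obstacle.
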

\begin{proof}
  We shall find at least one (a priori not optimal) transference plan from $\mu_0$ to $\mu_1$ by rearranging only $|a_k|$ or $|b_k|$ mass within an arc of length $2\pi/k$. 
  Let 
  $$
  D_k = \{(\theta, \varphi)\in S^1 \times S^1 \mid \varphi = \left(\theta + \frac{\pi}{k} \right) \mbox{ mod } 2 \pi \}, \quad k \in \mathbb{Z}^+ , 
  $$
  and $D_\infty$ be the diagonal $\{(\theta, \theta)\}$ in $S^1 \times S^1$. Consider the following coupling:
  \begin{equation}
    \kappa = \delta(D_\infty)\nu + \sum_{k\in \mathbb{Z}^+} \delta(D_{k})  \left((a_k \cos)^{+}{(k\theta)} + (b_k \sin)^{+}{(k\theta)}\right).
  \end{equation}
This coupling keeps an amount of mass in place, which is shared between $\mu_0$ and $\mu_1$, and transport the rest within one corresponding period. It follows that $\kappa$ has marginals $\mu_0$ and $\mu_1$ and is an admissible transference plan. This means that
\begin{equation}
  W_2^2(\mu_0,\mu_1) \leq \int_{S^1 \times S^1} c(\theta, \varphi)\, \dd \kappa(\theta, \varphi) = \frac{2\pi^2}{k^2} \left(|a_k|+|b_k|\right),
\end{equation}
where the cost function $c(\theta, \varphi) = \min{(|\theta - \varphi|^2, (2\pi -|\theta - \varphi|)^2 )} $ is associated with the geodesic distance along the circle.
\end{proof}

\begin{remark}
  In the proof of Theorem~\ref{thm:freq-sens}, we use a constructive approach rather than the explicit solution of the 1D OT. 
  The result holds true for high dimensional domains with boundaries. The proof needs to be modified concerning boundary treatment, and the corresponding weight is $|k|^{-2}$. It is also worth mentioning that $W_2$ is not very sensitive to oscillations and hence offers a natural weighting emphasizing the low-frequency differences. Therefore, the primary motivation for using $W_2$ is to solve large-scale errors instead of pursuing high-resolution imaging.
\end{remark}

\subsection{Gradient of quadratic Wasserstein distance}
The seismic inverse problem is that of solving for model functions in a nonlinear system. Considering the large scale of the system, the commonly used approach is to formulate the inverse problem as an optimization problem and solve it with gradient-based methods. A brief discussion of the directional differentiability properties of the quadratic Wasserstein distance along certain directions is presented here. We start by extending $\J_{\nu}(\mu) = \mathcal{T}(\mu,\, \nu)$ to a functional on $L_2$. 

Roughly speaking, the optimization is performed using linearization in a vector space and, instead of the $L_2$-norm, the total transportation cost is used as the objective function. As a result, this suggests that it is necessary to extend the functional from the probability space to the $L_2$ space. With a slight abuse of notation, we extend the functional to $\mathcal{T}:L_2 \times L_2 \rightarrow [0,+\infty]$ by
\begin{equation}
  \mathcal{T}(\mu, \nu) = 
  \left\{
    \begin{array}{>{\displaystyle}r>{\displaystyle}l}
    W_2^2(\mu, \nu),  & \quad \mbox{if } \mu, \nu \in \mathcal{P}, \\
    +\infty ,  & \quad \mbox{otherwise.}
  \end{array}
  \right.
\end{equation}
Next, we introduce a subset $\mathcal{U}\subset\mathcal{P}$, which is, in some sense, served as the ``interior'' of $\mathcal{P}$. Then, a short discussion is presented on the differentiability properties of the transportation cost $\mathcal{T}(\mu,\nu)$ over $\mathcal{U}$, see \cite{Villani2003,Santambrogio2015} for more detail and more general cases.
Discussion in this section paves the way to data encoding and minimization of the misfit between seismic data in transportation sense. 

Let $\Sigma$ be the Borel $\sigma$-algebra on the given bounded domain in $\mathbb{R}^n$ and 
\begin{equation}
\mathcal{U} = \{ \mu \in \mathcal{P} \mid \exists r>0 \mbox{ s.t. } \int \chi_{A}\, \dd \mu \geq r \int \chi_{A}\, \dd x , \quad  \forall A\in\Sigma \}.
\end{equation}

\begin{theorem}\label{thm:grad}
  Let $\mathcal{T}:L_2 \times L_2 \rightarrow [0, +\infty]$ be the extended transportation cost. Consider the functional $\gamma \mapsto \mathcal{T}(\gamma, \nu)$ for a fixed measure $\nu\in \mathcal{P}$. If $\mu\in\mathcal{U}$, then 
  \begin{equation}
  \frac{\partial \mathcal{T}(\gamma,\nu)}{\partial \gamma}(\mu) = \varphi,
  \end{equation}
  where $\varphi$ is the Kantorovich potential associated with $\mu$ and is unique up to additive constants.
\end{theorem}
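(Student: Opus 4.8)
The plan is to read off the first variation directly from Kantorovich duality (Theorem~\ref{thm:dual}), which represents $\gamma\mapsto\mathcal{T}(\gamma,\nu)$ as the supremum of the affine maps $\gamma\mapsto\int\phi\,\dd\gamma+\int\psi\,\dd\nu$ over admissible pairs $\phi(x)+\psi(y)\le\frac12|x-y|^2$. In particular this functional is convex on $L_2$, so it suffices to compute, for a bounded mean-zero perturbation $h$, the one-sided derivatives of $t\mapsto\mathcal{T}(\mu+th,\nu)$ at $t=0$ and show they coincide with $\int\varphi h\,\dd x$; such $h$ form a dense subset of the tangent set $\{h\in L_2:\int h\,\dd x=0\}$, and here the hypothesis $\mu\in\mathcal{U}$ enters first: $\mu\ge r\,\dd x$ guarantees $\mu+th\ge0$, hence $\mu+th\in\mathcal{P}$ with unit mass, for $|t|\le r/\|h\|_\infty$, so the difference quotients are finite.

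The lower bound is immediate. Let $(\varphi,\varphi^c)$ be an optimal dual pair for $(\mu,\nu)$, with $\varphi^c(y)=\min_x\big(\frac12|x-y|^2-\varphi(x)\big)$ as in \eqref{plan-dual-relation}. Admissibility is a pointwise constraint that does not see the marginals, so $(\varphi,\varphi^c)$ is also admissible for $(\mu+th,\nu)$, giving $\mathcal{T}(\mu+th,\nu)\ge\int\varphi\,\dd\mu+t\int\varphi h\,\dd x+\int\varphi^c\,\dd\nu=\mathcal{T}(\mu,\nu)+t\int\varphi h\,\dd x$. Dividing by $t$ shows the right derivative at $0$ is $\ge\int\varphi h\,\dd x$ and the left derivative is $\le\int\varphi h\,\dd x$.

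For the reverse inequality, let $(\varphi_t,\varphi_t^c)$ be an optimal dual pair for $(\mu+th,\nu)$, normalized by $\int\varphi_t\,\dd x=0$; it is admissible for $(\mu,\nu)$, so $\mathcal{T}(\mu+th,\nu)=\int\varphi_t\,\dd(\mu+th)+\int\varphi_t^c\,\dd\nu\le\mathcal{T}(\mu,\nu)+t\int\varphi_t h\,\dd x$, which closes the gap once we know $\int\varphi_t h\,\dd x\to\int\varphi h\,\dd x$ as $t\to0$. This is the substantive step. On the compact domain the quadratic cost has bounded gradient, so every $c$-concave potential is Lipschitz with one fixed constant; together with the normalization this makes $\{\varphi_t\}$ equi-Lipschitz and uniformly bounded, and Arzel\`a--Ascoli extracts from any sequence $t_n\to0$ a uniform limit $\bar\varphi$. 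Stability of optimal transport — passing to the limit in the duality along $\mu+t_nh\rightharpoonup\mu$, with $\nu$ fixed — identifies $\bar\varphi$ as an optimal Kantorovich potential for $(\mu,\nu)$. Now $\mu\in\mathcal{U}$ enters a second, decisive time: $\mu\ge r\,\dd x$ forces $\operatorname{supp}\mu$ to be the whole (simply connected, hence connected) domain, so Brenier's theorem — or, in the one-dimensional case relevant to seismic data, the explicit relation $\varphi'(x)=x-T(x)$ with $T=f_0^{[-1]}\circ f_1$ unique from Theorem~\ref{thm:w2-1d} — makes the optimal potential unique up to an additive constant; the normalization then pins $\bar\varphi=\varphi$. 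As every subsequence has a further subsequence converging to $\varphi$, we conclude $\varphi_t\to\varphi$ uniformly, hence $\int\varphi_t h\,\dd x\to\int\varphi h\,\dd x$.

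Combining the two bounds, the right derivative of $t\mapsto\mathcal{T}(\mu+th,\nu)$ at $0$ is squeezed between $\int\varphi h\,\dd x$ and $\limsup_{t\downarrow0}\int\varphi_t h\,\dd x=\int\varphi h\,\dd x$, and symmetrically for the left derivative; so the map is differentiable at $0$ with derivative $\int\varphi h\,\dd x$, i.e. $\frac{\partial\mathcal{T}(\gamma,\nu)}{\partial\gamma}(\mu)=\varphi$. Adding a constant to $\varphi$ leaves the functional $h\mapsto\int\varphi h\,\dd x$ unchanged on mean-zero directions, so the derivative is well defined precisely as the class of $\varphi$ modulo constants, which is the asserted uniqueness. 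I expect the stability step $\varphi_t\to\varphi$ to be the main obstacle: it needs both the equi-Lipschitz/compactness estimate for $c$-concave potentials on the bounded domain and the uniqueness-up-to-constants that $\mu\in\mathcal{U}$ supplies — without full support of $\mu$ the limit $\bar\varphi$ is pinned down only on $\operatorname{supp}\mu$ and the derivative genuinely depends on the chosen potential. An envelope/Danskin-type theorem for suprema of affine maps with a unique maximizer would package the conclusion more abstractly, but verifying its hypotheses reduces to exactly the same two facts.
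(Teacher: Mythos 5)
Your argument is correct and follows essentially the same route as the paper's proof: the lower bound from admissibility of the optimal dual pair for the perturbed marginal, the upper bound from admissibility of the perturbed pair for the original marginal, and convergence of the normalized potentials $\varphi_t\to\varphi$ via stability plus the uniqueness that full support of $\mu$ (i.e.\ $\mu\in\mathcal{U}$) provides. Your treatment of the stability step (equi-Lipschitz $c$-concave potentials, Arzel\`a--Ascoli, identification of the limit) is in fact more detailed than the paper's, which simply invokes stability of the optimal transference map and Brenier's theorem at that point.
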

\begin{proof}  
  For some fixed $\gamma \in \mathcal{U}$, consider the sequence $\{\mu_t \triangleq \mu + t(\gamma - \mu)\}$ converging to $\mu$ in the sense of 
  \begin{equation}
    \lim_{t \rightarrow 0} \mathcal{T}(\mu_t, \mu) = 0.
  \end{equation}
  By the triangle inequality on $W_2$, one gets
  \begin{equation}
    \lim_{t \rightarrow 0} \mathcal{T}(\mu_t, \nu) -\mathcal{T}(\mu, \nu) = 0, \quad \forall \nu \in \mathcal{P}.
  \end{equation}
  Let $(\varphi, \psi)$ be an optimizing pair in the Kantorovich dual formulation, i.e., 
  \begin{equation}
    \mathcal{T}(\mu, \nu) = \int \varphi \, \dd \mu + \int \psi  \, \dd \nu ,
  \end{equation}  
  and we additionally assume that $\int \varphi (x) \dd x =0$, thus making the unique determination of  $(\varphi, \psi)$.
  The sub-differentiability of $\mathcal{T}(\cdot, \nu)$ follows from the fact that $(\varphi, \psi)$ is optimal for $\mathcal{T}(\mu,\nu)$, and is not necessarily optimal for $\mathcal{T}(\mu_t,\nu)$, 
  \begin{equation}
    \begin{aligned}
         & \mathcal{T}(\mu_t,\nu) - \mathcal{T}(\mu, \nu) \\
    \geq & \left(\int \varphi \, \dd \mu_t + \int \psi  \, \dd \nu \right) - \left(\int \varphi \, \dd \mu + \int \psi  \, \dd \nu \right) \\
      =  & t \int \varphi \, \dd (\gamma-\mu).
    \end{aligned}
  \end{equation}

  For the other part of the differentiability, we denote a subsequence realizing the limit superior of $\mathcal{T}(\mu_t,\nu)$ by $\{\mu_{t_k}\}$, i.e., 
  \begin{equation}
    \lim_{k\rightarrow  +\infty} \mathcal{T}(\mu_{t_k},\nu) = \limsup_{t \rightarrow 0} \mathcal{T}(\mu_t,\nu),
  \end{equation}
  and let $(\varphi_k, \psi_k)$ be an optimizing pair for $\mathcal{T}(\mu_{t_k},\nu)$. Additionally, we assume $\int \varphi_k (x) \dd x =0$. Thus, the uniqueness of $\varphi_k$ follows by the $\dd \mu_k$-uniqueness of $\nabla \varphi_k$ and the fact that $\mu_k\in \mathcal{U}$ is positive.
  Then, we conclude from the suboptimality of $(\varphi_k, \psi_k)$ for $\mathcal{T}(\mu, \nu)$ that
  \begin{equation}
    \begin{aligned}
        &  \mathcal{T}(\mu_{t_k},\nu) - \mathcal{T}(\mu, \nu) \\
    \leq &   \left(\int \varphi_k \, \dd \mu_{t_k} + \int \psi_k  \, \dd \nu \right) - \left(\int \varphi_k \, \dd \mu + \int \psi_k  \, \dd \nu \right)  \\
    = &  t_k \int  \varphi_k \, \dd (\gamma-\mu).
    \end{aligned}
  \end{equation}
From the stability of the optimal transference mapping and Brenier's theorem \cite{Villani2003}, we know $\varphi_k \rightharpoonup \varphi$. Hence 
  \begin{equation}
    \lim_{t\rightarrow 0} \frac{\mathcal{T}(\mu_t,\nu) - \mathcal{T}(\mu, \nu)}{t}  = \int \varphi \, \dd (\gamma-\mu).    
  \end{equation}
The uniqueness of $\varphi$ up to additive constants follows by noting that $\nabla \varphi$ is $\dd \mu$-unique and $\mu$ is positive everywhere.

\end{proof}

\begin{remark}[On the strictly positive range of the encoding mapping]
Usually functions differing on a measure-null set only are not distinguished. In the inverse problem context, one compares two encoded data and does not expect them to be invisible to each other. The definition of $\mathcal{U}$ originates from the idea that any two elements of $\mathcal{U}$ should be absolutely continuous to each other and the observation $\mathcal{U}+\varepsilon \xi \subset \mathcal{U}$ for small $\varepsilon$ and bounded mean-zero perturbation $\xi$. On the other hand, the positiveness of $\mu$ is required to ensure that the derivative $\varphi$ is unique up to additive constants over the whole domain. In the next section, this uniqueness will be used to show the associated gradient for the velocity model is unique, and therefore, the adjoint state method is well defined. Last but not least, the uniform lower bound $r$ in the definition of $\mathcal{U}$ is to give more space for the line search in the optimization. 
\end{remark}

\section{Encoding methods}
\label{sec:encode}
In this section, we investigate the criterion for selecting a proper encoding method to transfer the non-Wasserstein-measurable seismic data into PDFs. A simple but quite useful strategy using the softplus function is presented, and some useful properties are examined.
Our goal is to make the data misfit measurable using the Wasserstein distance and efficiently calculate the associated gradient. In this perspective, we suggest the following strategies to choose encoding map $\mathcal{D}$:
\begin{enumerate}
  \item The range of $\mathcal{D}$ is contained in $\mathcal{U}$;
  \item $\mathcal{D}$ is differentiable and invertible;
  \item $\mathcal{D}$ is a pointwise mapping, i.e., $(\mathcal{D}\circ u)(x) =  \mathcal{D}(u(x))$.
\end{enumerate}
The first point guarantees the existence and uniqueness (up to an additive constant) of the first variation of the transportation cost. The second one makes the mapping compatible with Quasi-Newton type methods.
The third point is purely for the sake of efficiency. 
Usually, to match the mass of the encoded data, a normalization procedure is involved, and it is hard to ensure the invertibility of the encoding map. A common solution for this issue is to keep the total mass aside and use it when need to invert the encoding map. Therefore, only the mass-distribution will be used to calculate the data misfit, which is consistent with the consensus that the seismic inversion depends primarily on phase, not amplitude information.
For example, one can map $u$ to $(\tilde{u}/\langle \tilde{u} \rangle, \langle \tilde{u} \rangle)$ with $\tilde{u} = \log(1+\exp(u))$, and use the first element only for the misfit calculation; the second element is needed when inverting the map.
In the following sections, encoding mappings that meet the above three conditions will be referred to as regular mappings.

\subsection{Uniqueness of the gradient in the adjoint state method}
According to Theorem~\ref{thm:grad}, for any $\mu\in \mathcal{U}$, the first variation of the transportation cost exists and is unique almost everywhere up to additive constants. 
Apparently, for all regular encoding maps, one expects that the gradient ${\dd \mathcal{J}}/{\dd m}$ in the adjoint state method does not depend on the particular choice of the Kantorovich potential $\varphi$. The following theorem presents a rigorous proof of this result.

\begin{theorem}
  Let $m$ be a parameter model and $u$ the data associated with $m$ as in \eqref{eq:wave}. For any fixed $\nu\in \mathcal{U}$, the value of 
\begin{equation}
  \frac{\dd }{\dd m} \mathcal{T}(\mathcal{D}(u(m)),\nu)
\end{equation}
does not depend on the particular law by which the Kantorovich potential $\varphi$ is chosen, provided that  $\mathcal{D}$ is differentiable.
\end{theorem}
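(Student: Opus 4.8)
The plan is to reduce the statement to one elementary fact: the total mass of the encoded datum is independent of the model parameter. Since $\mathcal{D}$ is a regular encoding map its range lies in $\mathcal{U}$, so $\tilde u \triangleq \mathcal{D}(u(m)) \in \mathcal{U}$; by Theorem~\ref{thm:grad} any two Kantorovich potentials associated with $\tilde u$ differ by an additive constant almost everywhere, and since $\tilde u$ is bounded below on the whole domain this coincides with Lebesgue-a.e. Hence it suffices to show that replacing $\varphi$ by $\varphi + c$ for an arbitrary $c \in \mathbb{R}$ in the adjoint source $\mathcal{D}'[u]^*(\varphi)$ of \eqref{eq:adjoint-eq-W2} leaves $\dd\mathcal{J}/\dd m = \langle v,\,-\partial\Phi/\partial m\rangle$ unchanged, where $v$ solves \eqref{eq:adjoint-eq-W2}.

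The first step I would take is to exploit linearity. The map $\varphi \mapsto \mathcal{D}'[u]^*(\varphi)$ is linear, and the adjoint-state equation \eqref{eq:adjoint-eq-W2} is linear in its right-hand side with homogeneous terminal conditions; therefore, if $v_i$ denotes the adjoint state corresponding to $\varphi_i$, the difference $w \triangleq v_2 - v_1$ is itself the adjoint state with source $c\,\mathcal{D}'[u]^*(\mathbf{1})$, where $\mathbf{1}$ is the constant function $1$ on the data domain, and the induced change in the computed gradient is $\langle w,\,-\partial\Phi/\partial m\rangle$. The second step is to transport this quantity back to the data side by rerunning the adjoint-state reciprocity of Section~\ref{sec:FWI}, i.e. the identity $\langle (\partial\Phi/\partial u)^* w,\,\delta u/\delta m\rangle = \langle w,\,(\partial\Phi/\partial u)\,\delta u/\delta m\rangle$ combined with the state relation \eqref{eq:state-eq-rela}. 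This yields $\langle w,\,-\partial\Phi/\partial m\rangle = c\,\langle \mathbf{1},\,\mathcal{D}'[u]\,(\delta u/\delta m)\rangle = c\,\langle \mathbf{1},\,\delta\tilde u/\delta m\rangle = c\int \delta\tilde u/\delta m \;\dd x$, the last equalities using that $\mathcal{D}'[u]$ is the Fréchet derivative of $\mathcal{D}$ and the chain rule $\delta\tilde u/\delta m = \mathcal{D}'[u]\,(\delta u/\delta m)$.

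The final step is mass preservation: because $\mathcal{D}$ maps seismic data into equal-mass non-negative measures, $\int \tilde u \,\dd x$ equals a prescribed constant for every $m$, so differentiating in $m$ gives $\int \delta\tilde u/\delta m\,\dd x = 0$. Consequently the change in the gradient vanishes, and $\dd\,\mathcal{T}(\mathcal{D}(u(m)),\nu)/\dd m$ does not depend on which representative $\varphi$ of the Kantorovich potential is selected.

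I expect the main obstacle to be bookkeeping rather than anything deep. One must verify that $\mathcal{D}'[u]$ genuinely admits an adjoint with respect to the $L_2$ pairing in play (this is exactly where differentiability of $\mathcal{D}$ is used) and that the reciprocity transposition between the model side and the data side is legitimate in the relevant function spaces, so that the chain of identities above is rigorous and not merely formal. Conceptually, the heart of the argument is the observation that the constant ambiguity in $\varphi$ pairs only with $\int \delta\tilde u/\delta m\,\dd x$, which is annihilated precisely by the normalization built into a regular encoding map; were $\mathcal{D}$ not mass-preserving this term would survive and the adjoint-state gradient would be genuinely ill-defined, which is why mass normalization is an essential requirement on $\mathcal{D}$.
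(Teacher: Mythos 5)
Your proposal is correct and follows essentially the same route as the paper: reduce to the constant ambiguity $\varphi_1-\varphi_0=c$ via Theorem~\ref{thm:grad}, use linearity of the adjoint equation, transpose the pairing back to the data side (the paper does this concretely by introducing the linearized wavefield $h_u$ solving \eqref{linear-wave} and integrating by parts twice, which is exactly your abstract reciprocity identity with $h_u=(\delta u/\delta m)h_m$), and kill the resulting term $c\int\mathcal{D}'[u](h_u)$ by mass normalization. If anything, you make explicit the final mass-preservation step that the paper's proof leaves implicit in its last ``$=0$''.
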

\begin{proof}
  Let $m_0,m_1$ be the first variations of $\mathcal{T}$ obtained with the particular choice of the Kantorovich potential, say $\varphi_0$ and $\varphi_1$, respectively. Recall from the adjoint state method \eqref{eq:W2-adj-stat} and \eqref{eq:adjoint-eq-W2}, that $m_k$'s are of the form
  \begin{equation}\label{img-cond}
    m_k = \int_0^T \, v_k \partial_t^2 u \dd t, \quad k=1,2,
  \end{equation}
  where $u$ is the background wavefield, and $v_k$ solves the adjoint wave equation with $\mathcal{D}'[u]^* (\varphi_k)$ as the right-hand side:
  \begin{equation}\label{adj-wave}
    \left\{
    \begin{array}{>{\displaystyle}r>{\displaystyle}l}
    \left( m(x) \frac{\partial_2}{\partial t^2} - \Delta \right) v_k(x,t) &  =  \mathcal{D}'[u]^* (\varphi_k) , \\
    v_k(x,T)  & =  0, \\
     \partial_t v_k(x,T) & =  0 .
     \end{array}
    \right.
  \end{equation}
  By Theorem~\ref{thm:grad}, we find that 
  \begin{equation}\label{cst-diff}
    \varphi_0 -\varphi_1 = c
  \end{equation}
  for some constant $c$. We claim that
  \begin{equation}
    \int_\Omega (m_0-m_1) h_m \dd x = 0, \quad \forall h_m \in L_2.
  \end{equation}
  To prove this, we consider an auxiliary wavefield $h_u$ that solves the wave equation with $-h_m \partial_t^2 u$ as the right-hand side:
  \begin{equation}\label{linear-wave}
    \left\{
    \begin{array}{>{\displaystyle}r>{\displaystyle}l}
    \left( m(x) \frac{\partial_2}{\partial t^2} - \Delta \right) h_u(x,t) &  =  -h_m \partial_t^2 u , \\
    h_u(x,0)  & =  0, \\
     \partial_t h_u(x,0) & =  0 .
     \end{array}
    \right.
  \end{equation}
  Then, it follows that
  \begin{equation}
    \begin{aligned}
      & \int_\Omega (m_0-m_1) h_m \dd x \\
      = & \int_{\mathbb{R}^n} \left(\int_0^T (v_0-v_1) \partial_t^2 u  \, \dd t \right) h_m \dd x \\
      = & -\int_{\mathbb{R}^n} \int_0^T \left(v_0-v_1\right) \left( m(x) \frac{\partial_2}{\partial t^2} - \Delta \right) h_u(x,t)  \, \dd t \, \dd x \\
      = & -\int_{\mathbb{R}^n} \int_0^T h_u \left(\mathcal{D}^{'}[u]^*\varphi_0 - \mathcal{D}^{'}[u]^* \varphi_1 \right)  \, \dd t \, \dd x \\
      = & -\int_{\mathbb{R}^n} \int_0^T c \mathcal{D}'[u] (h_u) \, \dd t \, \dd x \\
      = & 0.
    \end{aligned}
  \end{equation}
  In the above derivation, the first equality is from \eqref{img-cond}; substituting for $h_m \partial_t^2 u$ using \eqref{linear-wave}, we obtain the second equality; the third equality employs \eqref{adj-wave} and integration by parts twice; then, we use the definition of the adjoint operator and \eqref{cst-diff} to conclude the proof.
\end{proof}

\subsection{Encoding with softplus function}
We now turn to the formulation of an encoding map using the softplus function.
The Logistic function is defined as
\[
 f(x) = \frac{L}{1+e^{-\beta(x-x_0)}},
\]
where $x_0$ is the value of the sigmoid's midpoint, $L$ is the curve's maximum value, and $\beta$ is the steepness of the curve. The standard logistic function is the one with parameters $(\beta=1 , x_0 = 0 , L=1)$, which yields
\[
 f(x) = \frac{e^x}{e^x + 1} = \frac{1}{1+ e^{-x}}.
\]
The logistic function is useful since it can take any real number, whereas the output always takes values between zero and one and hence is interpretable as a PDF. In practice, due to the nature of the exponential function $e^{-x}$, it is often sufficient to compute the standard logistic function for $x$ over a small range of real numbers, such as a range contained in $[-5, 5]$.
The anti-derivative of the logistic function,
\[
 f(x) = \log{(1+e^x)}
\]
is widely used in logistic regression, which is used in various areas, including machine learning and social sciences. The output also takes a positive value. Its derivative shows that the variance for negative input value is small. The graph of the function (Figure~\ref{fig:ins-func}) shows that the behavior of $f(x)$ is flat when $x<0$ and is very similar to $f(x) = x$ when $x>0$.
\begin{figure}[hbtp]
  \begin{center}
  \includegraphics[width=.6\textwidth]{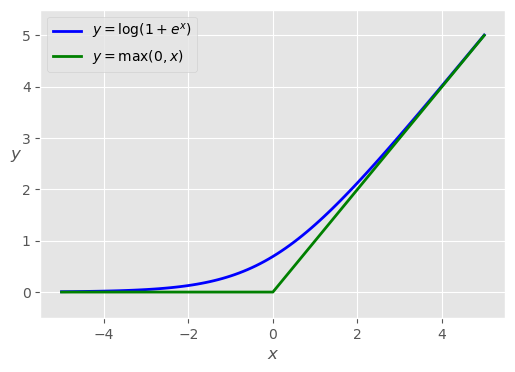}
  \end{center}
  \caption{Softplus function and projection to the positive part}
  \label{fig:ins-func}
\end{figure}

We use the following operation
\begin{equation}
\label{eq:encode-func}
\tilde{u}(t) = \frac{1}{|\beta|} \log (1+ e^{\beta u(t)})
\end{equation}
composited with the normalization 
$$
\tilde{u} \mapsto \frac{\tilde{u}}{\langle \tilde{u} \rangle} 
$$
to encode the seismic data into PDFs, where $\langle \cdot \rangle$ denotes the averaging operation.
It is easy to check that
\begin{equation}\label{asym_bahav}
\lim_{\beta \rightarrow +\infty}\tilde{u} = u^+ \triangleq \max(u,0)  \mbox{ and }
\lim_{\beta \rightarrow -\infty}\tilde{u} = u^- \triangleq \max(-u,0) ,
\end{equation}
and the convergence is uniform. The above asymptotic behavior is an important advantage of this encoding procedure. 
One can expect the Wasserstein distance of the functions processed using this differentiable encoding method to show similar behavior as the one using $u^{+}$ while the smoothness preserved.
According to the stability of the optimal transference plans \cite[Corollary 5.23]{Villani2008}, we can identify the convex functional on the seismic data $u$ by checking its convexity on $u^+$ and $u^-$. In practice, large $\beta$ can be chosen for better convexity in the objective function, but care should be taken to avoid the gradient-vanishing problem and overflow errors. 

\subsection{Convexity of the encoded data}
We conclude this section by examining the convexity under different measurement methods.
The main motivation for using the OT metric in the seismic inversion is to exploit its convexity to the translation and dilation, which are the primary data mismatch types.
In \cite{Engquist2016}, it is proved that the quadratic Wasserstein distance is convex with respect to translation and dilation, even in the case of a mixture of the two. In general, this convexity cannot be preserved after encoding. 
Roughly speaking, the encoding map can be interpreted as a procedure to generate non-negative functions from seismic data via adding/removing mass pointwise. After encoding, the endpoint, $t=0,T$, can be a source or sink of mass. Hence the transportation cost is no longer convex to the translation and dilation. 

Using the properties in \eqref{asym_bahav} and \cite[Theorem~2.1-2.3]{Engquist2016}, one can easily show that the encoded data using softplus function bears the asymptotic convexity when the pre-encoding data has compact support. 
Figures~{\ref{fig:geodesic_L2}-\ref{fig:geodesic_Logi}} present the interpolations of a Ricker wavelet $p_0(t)$ and its translation $p_1(t)=p_0(t-0.6)$ in $L_2$, $W_2$ with adding-constant encoding method, and $W_2$ with softplus encoding, respectively. 
Unsurprisingly, the $L_2$ one calculates the interpolation in a pointwise manner; the encoding method using added constants shows a phenomenon of local transportation; by contrast, the one using softplus function accurately captures the translation information.

\begin{figure}[hbtp]
  \begin{center}  
    \includegraphics[width=.75\textwidth]{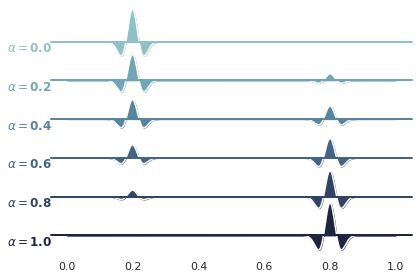}
  \end{center}
  \caption{$\displaystyle{\argmin_p \left((1-\alpha)\|p-p_0\|_2^2 + \alpha \|p-p_1\|_2^2\right)}$}
  \label{fig:geodesic_L2}
\end{figure}

\begin{figure}[hbtp]
  \begin{center}  
    \includegraphics[width=.75\textwidth]{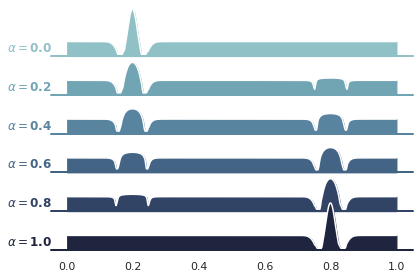}  
  \end{center}
  \caption{$\displaystyle{\argmin_p \left((1-\alpha)\mathcal{T}\left(p,\frac{p_0+c}{\langle p_0+c\rangle}\right) + \alpha \mathcal{T}\left(p,\frac{p_1+c}{\langle p_1+c\rangle}\right)\right)}$}
  \label{fig:geodesic_ushft}
\end{figure}

\begin{figure}[hbtp]
  \begin{center}  
    \includegraphics[width=.75\textwidth]{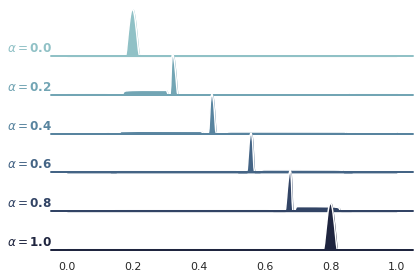}
  \end{center}
  \caption{$\displaystyle{\argmin_p \left((1-\alpha)\mathcal{T}(p,\mathcal{D}(p_0)) + \alpha \mathcal{T}(p,\mathcal{D}(p_1))\right),\quad \mathcal{D}(p) = \frac{\log(1+e^p)}{\langle\log(1+e^p)\rangle}}$}
  \label{fig:geodesic_Logi}
\end{figure}

\section{Numerical examples}\label{sec:num}
In this section, the properties of our proposed algorithm are illustrated through two numerical experiments. We first use simple structural models to investigate the relationship between the convexity of the misfit function and the encoding parameter $\beta$. The numerical experiment indicates that one can tune $\beta$ to alleviate the local minima problem. 
Then, an inversion is performed on the 2D benchmark Marmousi model \cite{Versteeg1994} to demonstrate the effectiveness of our method.
To take advantage of the 1D explicit solution and avoid confusion on transportation over different units, a trace-by-trace strategy is employed to compute the objective function and the adjoint source. That is, we use the objective function
\begin{equation*}
    \J(u(x,t),d(x,t)) = \int \mathcal{T}(u(x,t),d(x,t)) \, \dd x.
\end{equation*}

\subsection{The landscape of objective functions}
We start our study of numerical experiments with a numerical investigation of the landscape of the misfit function. The experiment is performed on a family of 2D models with two feature variables. The receivers are uniformly distributed at an interval of $40$ m over the top surface with $16.85$ km length, and a point source is located in the middle of the receivers. We use the following formula to build the velocity models:
\begin{equation}
\label{eq:linear-vel}
v(x,z) =
\left\{
\begin{array}{>{\displaystyle}r>{\displaystyle}l}
1500, \quad \mbox{when } z<50,
\\
v_0 + \alpha z,  \quad \mbox{when } z \geq 50.
\end{array}
\right.
\end{equation}
A band-pass filter at $3-18$ Hz is applied to the source function and the data to imitate the actual exploration seismic data.
The reference data is obtained with velocity model constructed with $v_0 = 2000$ m/s and $\alpha = 0.7 \, \mbox{s}^{-1}$.
Figure~\ref{fig:misfit-convexity} shows the misfit functions as functions of $v_0$ and $\alpha$. For better comparison, we normalize the misfit using its maximum value.

The landscape using $L_2$ metric is shown in Figure~\ref{fig:mf-L2}. Due to the high nonlinearity of the inverse problem and limited acquisition geometry, there are many local minima even for this simple-structured model. The gradient of the misfit function gives no information or even wrong information once the background velocity is too far from the reference one. To arrive at the global minimum using a gradient-based descent method, one needs to start from an initial model within the same basin as the global minimum.

We further investigate the applicability of convexifying the $W_2$ misfit using encoding parameter $\beta$. This experimental setting provides a perfect scenario for the quadratic Wasserstein metric, since the number of the seismic events stay the same.
Actually, it is easy to prove the asymptotic convexity of the objective function rigorously. Therefore, our goal is to eliminate the local minima by tuning $\beta$. Figure~\ref{fig:mf-W2-1}-\ref{fig:mf-W2-3} displays the landscapes with gradually increasing $\beta$. It demonstrates that the larger $\beta$, the less local minima.
It is also interesting to note that $W_2$ misfit functions are smoother than the $L_2$ one, which is associated with the fact that the regularity of the optimal transportation map is one degree higher than that of the seismic data.

\begin{figure}[ht]
\centering
\subfigure[$L_2$]{
    \includegraphics[width=.5\textwidth,trim={2cm 0.2cm 1cm 1.2cm},clip]{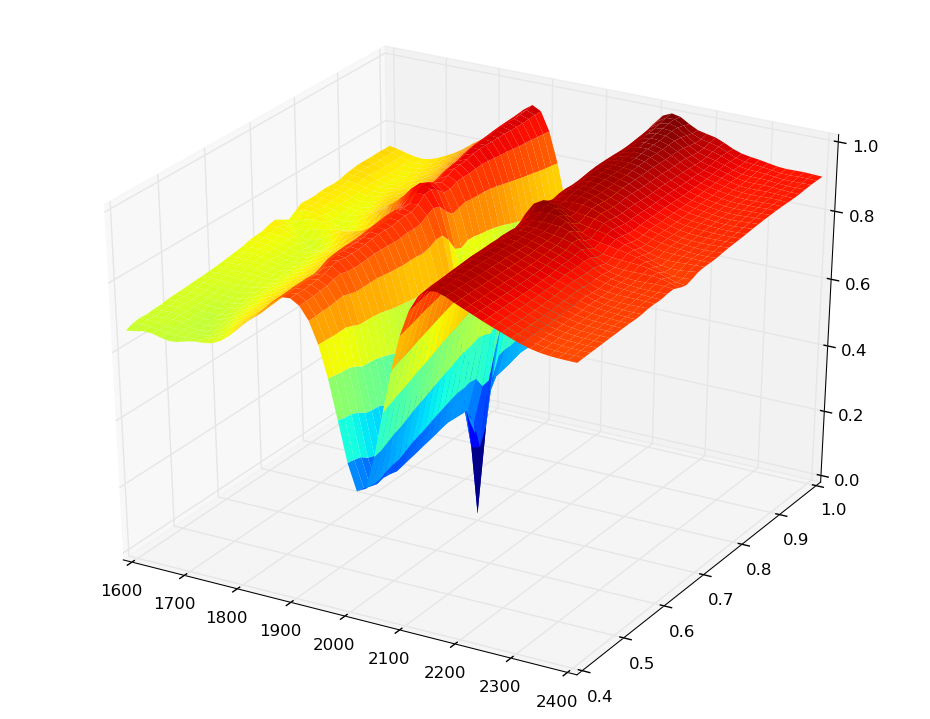}
    \label{fig:mf-L2}
}%
\subfigure[$W_2$ with $\beta=0.8$]{
    \includegraphics[width=.5\textwidth,trim={2cm 0.2cm 1cm 1.2cm},clip]{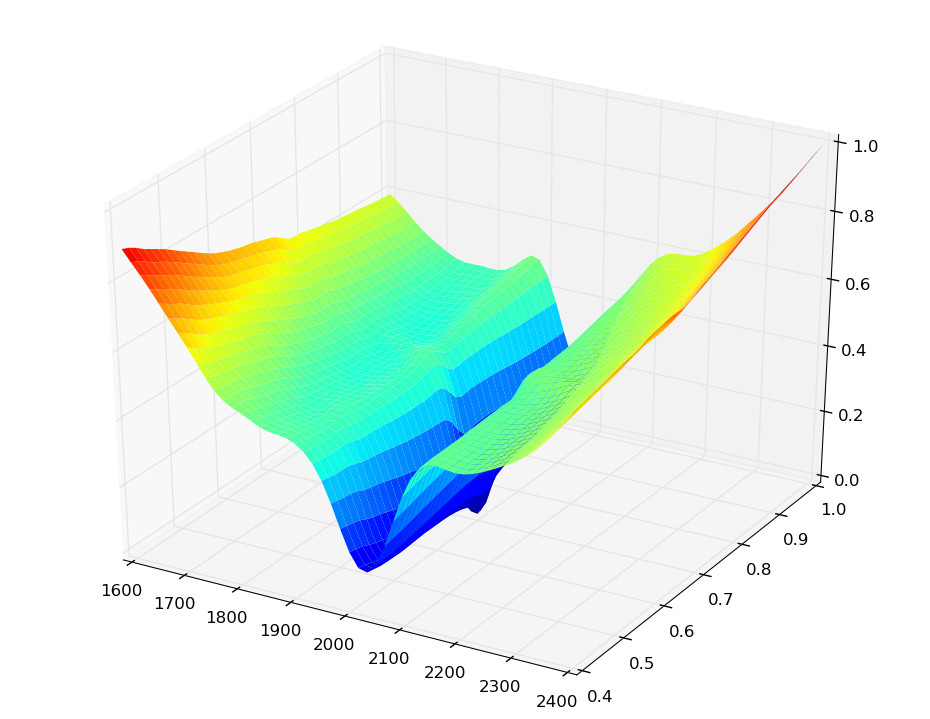}
    \label{fig:mf-W2-1}
} \\
\subfigure[$W_2$ with $\beta=1.2$]{
    \includegraphics[width=.5\textwidth,trim={2cm 0.2cm 1cm 1.2cm},clip]{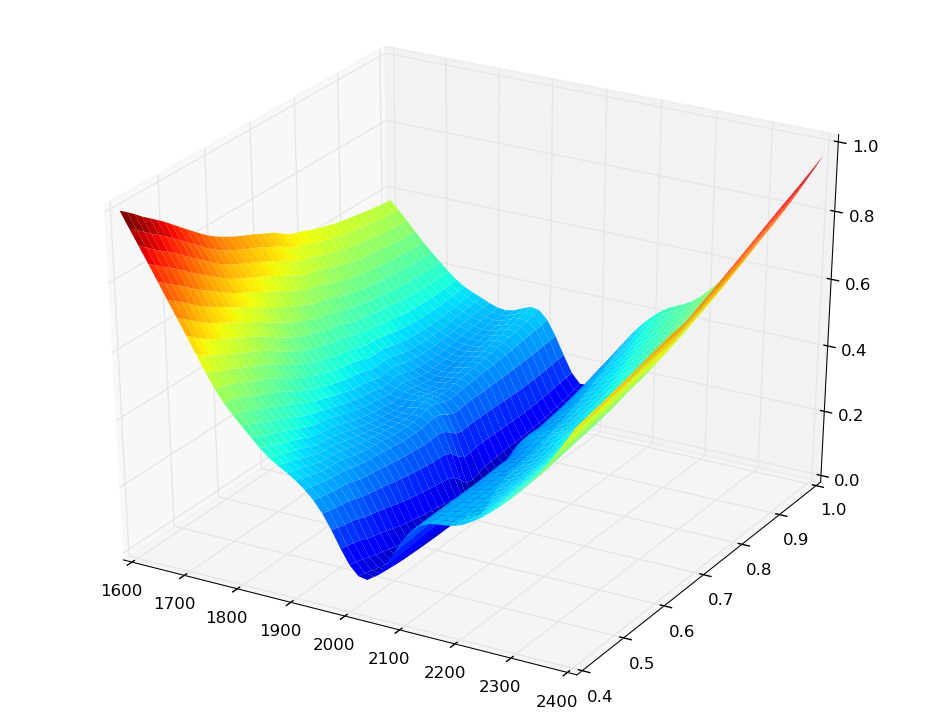}
    \label{fig:mf-W2-2}
}%
\subfigure[$W_2$ with $\beta=6$]{
    \includegraphics[width=.5\textwidth,trim={2cm 0.2cm 1cm 1.2cm},clip]{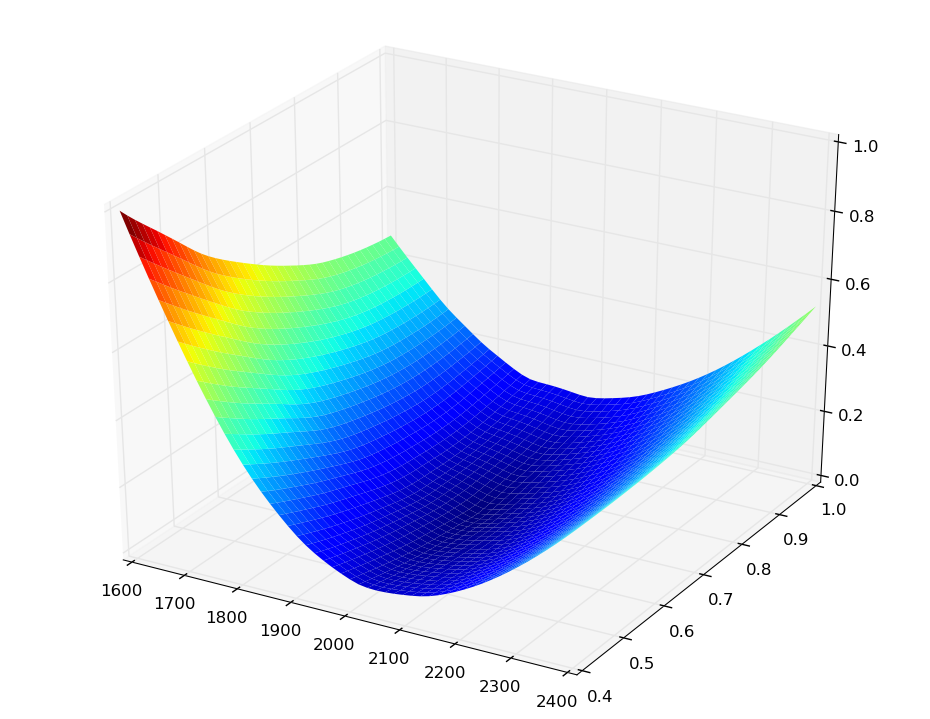}
    \label{fig:mf-W2-3}
}
\caption[]{Comparison of landscapes}
\label{fig:misfit-convexity}
\end{figure}

% \begin{figure}[tbhp]
%   \label{fig:misfit-convexity}
%   \includegraphics[width=.5\textwidth,trim={2cm 0.2cm 1cm 1.2cm},clip]{./Figures/misfit_L2}%
%   \includegraphics[width=.5\textwidth,trim={2cm 0.2cm 1cm 1.2cm},clip]{./Figures/misfit_Logi_p8}
%   \includegraphics[width=.5\textwidth,trim={2cm 0.2cm 1cm 1.2cm},clip]{./Figures/misfit_Logi_1p2}%
%   \includegraphics[width=.5\textwidth,trim={2cm 0.2cm 1cm 1.2cm},clip]{./Figures/misfit_Logi_6}
% \end{figure}

\subsection{Inversion on the Marmousi model}

In the following experiments, we use the Marmousi benchmark model \cite{Versteeg1994}. The true velocity model is shown in Figure~\ref{fig:marm-true}. A $921 \times 319$ grid is used to represent an approximately 9.2 km $\times$ 3.2 km area. In both $L_2$ and $W_2$ cases, a heavily smoothed model from the true one, as shown in Figure~\ref{fig:marm-init}, is used as the initial model for the iterative gradient-based descent method.

In this experiment, a perfectly matched layer (PML) absorbing boundary condition is applied to the domain boundaries except for the top free surface. The synthetic data is generated with an array of equally spaced 201 sources at depth 8 m and 461 receivers at depth 12 m distributed over the model's top surface.  
The source signature is the Ricker wavelet with a center frequency of 10 Hz, and the recording time is $4.5$ s. A 3-18 Hz band-pass filter is applied to the source and the data to imitate the actual seismic data in geophysical exploration.
For the modeling and inversion, we use Devito \cite{Lange2016} to solve the acoustic wave equation and the associated adjoint state equation. The numerical solution is obtained with a finite-difference scheme, which is forth-order accurate in space and second-order in time. We employ the limited-memory BFGS method with box constraints \cite{Byrd1995} implemented in SciPy \cite{Virtanen2020} for the optimization. All the numerical experiments stop when the decrease of the objective function meets the stopping criteria,
\begin{equation*}
    \frac{\J_k - \J_{k+1}}{\max (\J_k,\J_{k+1}, 1)} < 10^{-5}.
\end{equation*}
The inversions using $L_2$ and $W_2$ with softplus encoding stop after $20$ and $27$ iterations, respectively. 
The reconstruction results are displayed in Figure~\ref{fig:marm-L2} and \ref{fig:marm-W2}.
Due to the significant difference between the initial model and the true model, the least-squares formulation suffers from a cycle-skipping issue. It is clear that the inversion using $L_2$ metric terminates with an incorrect velocity model.
\begin{figure}[ht]
\centering
\subfigure[True model]{
\includegraphics[width=.5\textwidth]{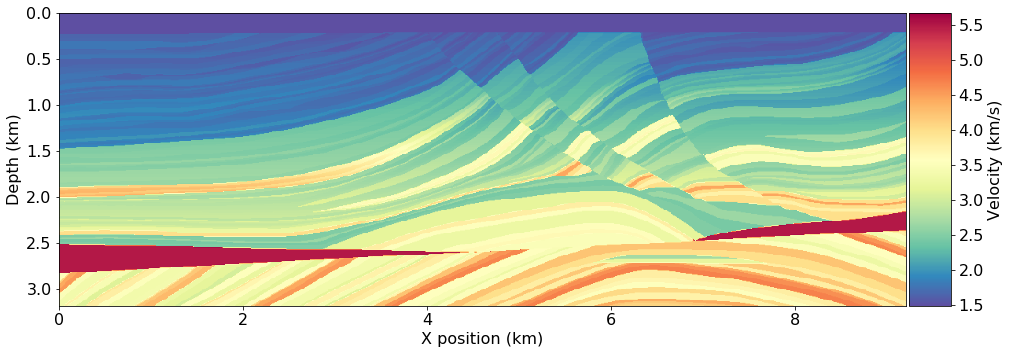}
\label{fig:marm-true}
}%
\subfigure[Initial model]{
\includegraphics[width=.5\textwidth]{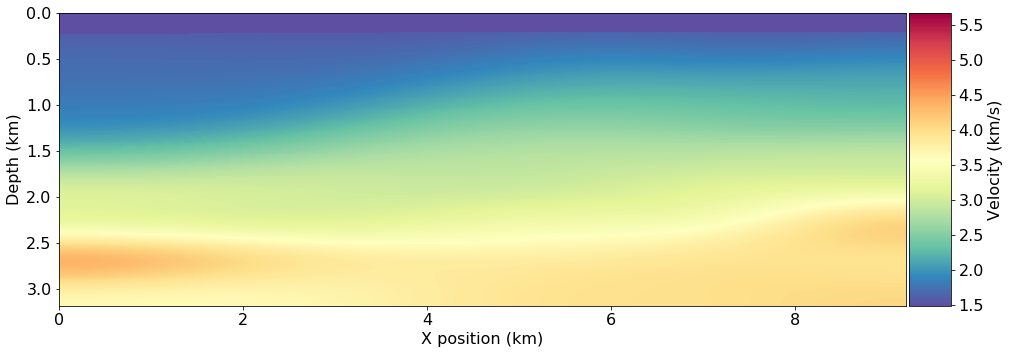}
\label{fig:marm-init}
} \\
\subfigure[Reconstructed model with $L_2$]{
\includegraphics[width=.5\textwidth]{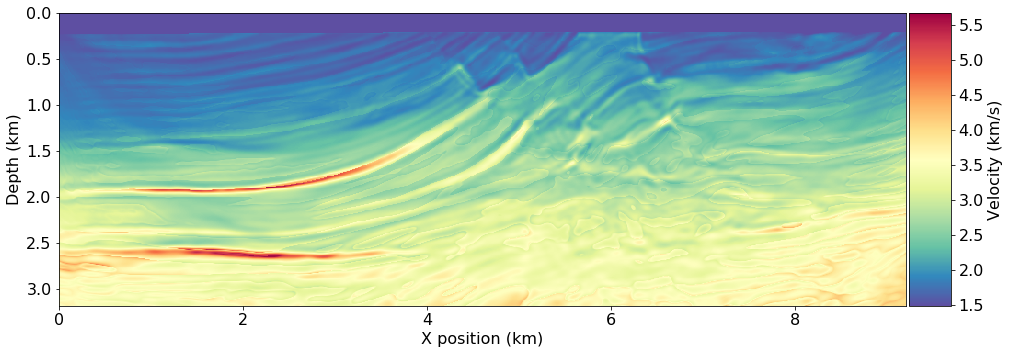}
\label{fig:marm-L2}
}%
\subfigure[Reconstructed model with $W_2$]{
\includegraphics[width=.5\textwidth]{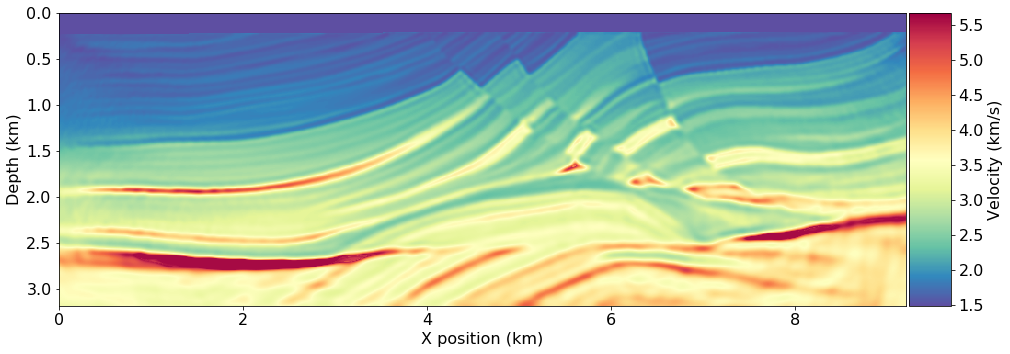}
\label{fig:marm-W2}
}
\caption{Velocity models in the numerical experiment with Marmousi model}
\label{fig:Marmousi-inv}
\end{figure}

\begin{figure}[tbhp]  
  \centering
  \subfigure[Objective function]{
  \includegraphics[width=.5\textwidth]{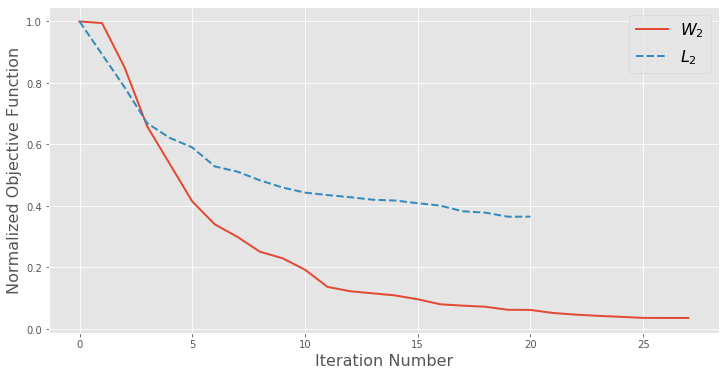}
  % \caption{Data error}
  \label{fig:data-misfit}
  }%
  \subfigure[Reconstruction error]{
    \includegraphics[width=.5\textwidth]{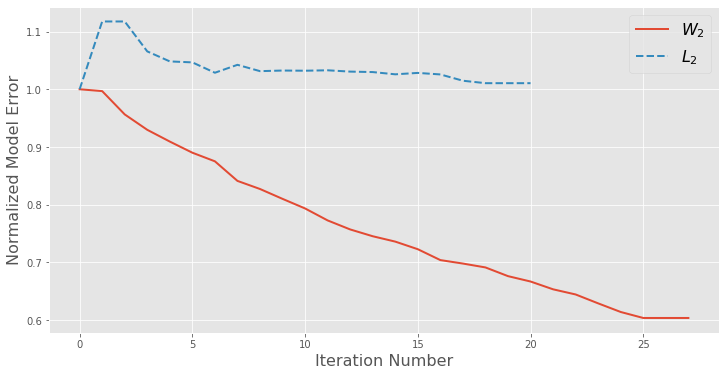}    
    \label{fig:slowness-misfit}
  }\\
  \caption{Data and model error}
  \label{fig:data-model-error}
\end{figure}

We present a vertical slice at $x=3$ km in Figure~\ref{fig:vel-slices}. By contrast, the $L_2$ metric produces low-velocity artifacts, which is strong evidence of cycle-skipping. Hot spots of slowness errors are shown in Figure~\ref{fig:hot-spot-ini}-\ref{fig:hot-spot-W2}. The $W_2$ metric correctly reconstructs the area swept by the diving waves. Some finer structures in the deeper region, mainly reflectors, can be improved using further iterations with a $L_2$ metric.
The analysis in Theorem~\ref{thm:freq-sens} suggests that $L_2$ should be better for the inversion of details when it does not suffer from the cycle-skipping issue anymore. We use a fixed encoding parameter $\beta=2.0$, as we can switch to $L_2$ metric once the cycle-skipping problem is overcome.

\begin{figure}[tbhp]  
  \centering
  \subfigure[Vertical slices]{
  \includegraphics[width=.5\textwidth]{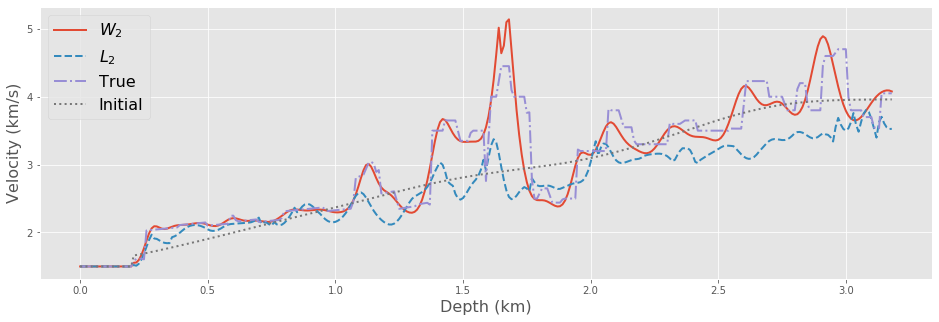}  
  \label{fig:vel-slices}
  }%
  \subfigure[Relative difference - Initial model]{
    \includegraphics[width=.5\textwidth]{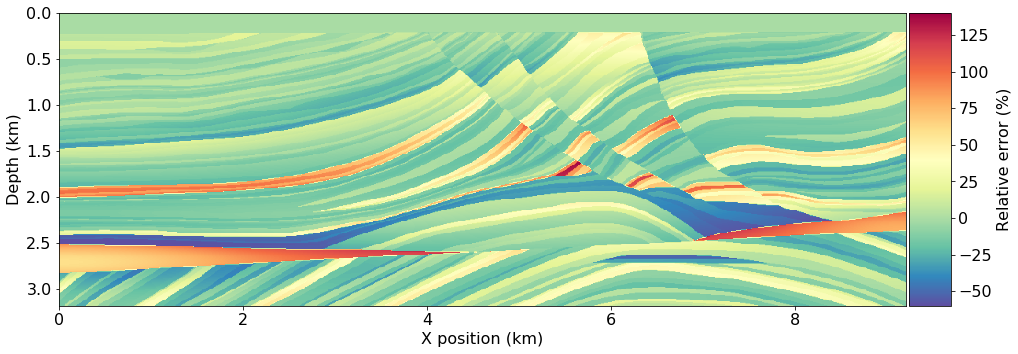}    
    \label{fig:hot-spot-ini}
  }\\
  \subfigure[Relative difference - $L_2$]{
  \includegraphics[width=.5\textwidth]{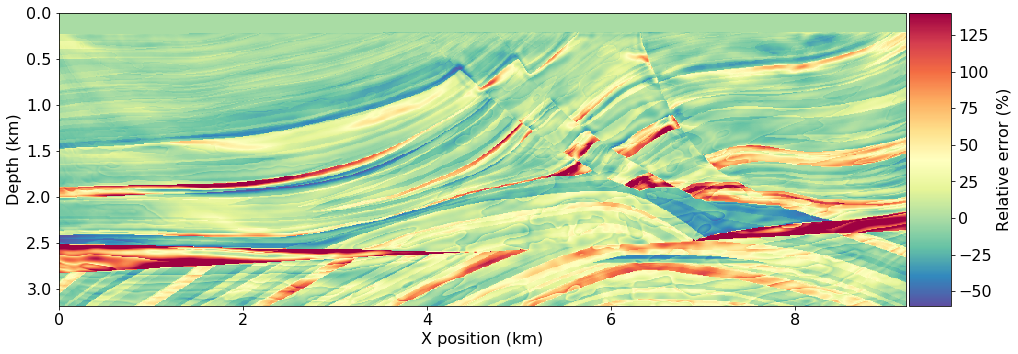} 
  \label{fig:hot-spot-L2}
  }%
  \subfigure[Relative difference - $W_2$]{
    \includegraphics[width=.5\textwidth]{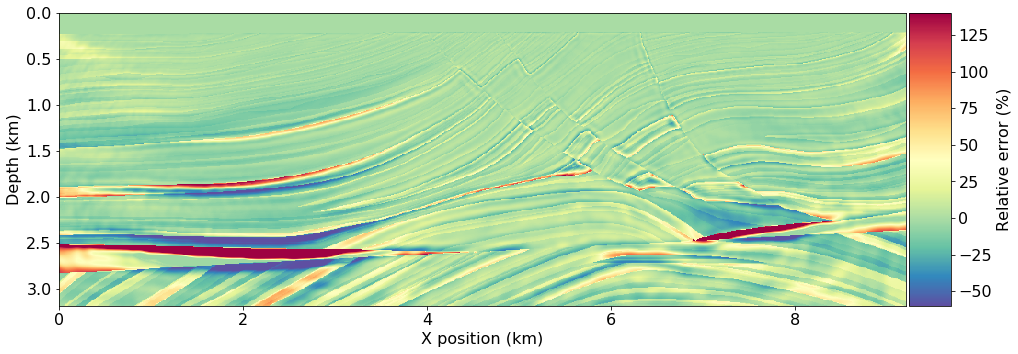}    
    \label{fig:hot-spot-W2}
  }\\
  \caption{Vertical slices and relative slowness difference $(m - m_{true})/m_{true}$}
  \label{fig:slice-hot-spot}
\end{figure}

% \begin{figure}[ht]
%   \centering  
%   \includegraphics[width=\textwidth]{./Figures/Slowness_square_Misfit}
%   \caption{Model error}
%   \label{fig:slowness-misfit}
% \end{figure}

% \begin{figure}[tbhp]
%   \label{fig:Marmousi-inv}
%   \includegraphics[width=\textwidth]{}
%   \includegraphics[width=\textwidth]{}
%   \includegraphics[width=\textwidth]{}
%   \includegraphics[width=\textwidth]{}
% \end{figure}

% \begin{figure}[tbhp]
% \label{fig:model-misfit}
% \includegraphics[width=\textwidth]{}
% \includegraphics[width=\textwidth]{}
% \end{figure}

\section{Conclusion}
We investigated the properties of the objective function for FWI using the quadratic Wasserstein metric and proper encoding methods. We rigorously prove that the quantity ${\dd \J}/{ \dd m}$, obtained using the adjoint state method, does not depend on the particular choice of the Kantorovich potential if one chooses the encoding method properly. In particular, transportation metric with softplus encoding has asymptotic convexity concerning time-shift and dilation.
It helps one extract time-shift information more accurately, thus provides the velocity model with appropriate large-scale changes, and mitigates the cycle-skipping problem. 

Another point that should be stressed is that, based on the result in Theorem~\ref{thm:freq-sens}, the transportation type objective function enhances low-frequency information as $\dot{H}^{-1}$ does. Thus, $W_2$ is more appropriate to be used when the initial model is far from the true model. Once the cycle-skipping issue is fixed, it is better to switch to a $L_2$ metric for fast high-resolution reconstruction.

On two numerical examples, we show the feasibility of the proposed method.
The first one using two-parameter models to illustrate how the softplus encoding parameter recasts the landscape of the objective function. In the second example, we demonstrate the accuracy and efficiency of our method when applied to synthetic data generated by the Marmousi model. We realize that a subtle choice of encoding parameter is not required. Typically, a value between $1$ and $5$ will fit most cases.

As we mainly focus on applying the transportation metric on seismic data, optimization techniques only involving first-order derivatives are adopted. The Wasserstein metric with softplus encoding can be extended to be suitable for Newton's method or other second-order algorithms. Moreover, other than treating $\mathcal{T}(\cdot, \nu)$ as a function defined on $L_2$ and considering only the differential formulation in Euclidean sense, another natural strategy is to use Otto's calculus \cite[Chapter 15]{Villani2008} and consider optimization using gradient flows in the Wasserstein space. These approaches will be investigated in a future article.

\appendix
\setcounter{section}{1}

\begin{algorithm}[tbh]
\caption{Calculation of $\mathcal{T}(p_0,p_1)$ }\label{alg:W2}
{\fontsize{12}{24}\selectfont
\begin{algorithmic}[1]
\Procedure{Pseudo-inverse}{$f_0,f_1,t$}\Comment{Calculation of $f_0^{-1}(f_1(t))$}
\State $m \gets 1$
\For{$k \gets 1 \, , N$}
  \State $\tilde{f} = f_1(t_k)$
  \While{$f_0(t_m) < \tilde{f}$}
  \State $m \gets m+1$
  \EndWhile
  \If{$m=1$}
  \State $\varphi(t_k) \gets t_m$
  \ElsIf{$m=N$ and $f(t_m)<\tilde{f}$}
  \State $\varphi(t_k) \gets t_m$
  \Else
  \State $\alpha \gets \frac{\tilde{f} - f(t_{m-1})}{f(t_m) - f(t_{m-1})}$
  \State  $\varphi(t_k) \gets (1-\alpha)t_{m-1} + \alpha t_m$
  \EndIf
\EndFor
\State \textbf{return} $\varphi$\Comment{The function $\varphi$ is $f_0^{-1}(f_1(t))$}
\EndProcedure
\Procedure{$\mathcal{T}$}{$p_0,p_1,t$}\Comment{Calculation of $\mathcal{T}(p_0,p_1)$}
\For{$k \gets 1 \, , N$}
  \State $f_0(t_k) \gets f_0(t_{k-1}) + p_0(t_k)$
  \State $f_1(t_k) \gets f_1(t_{k-1}) + p_1(t_k)$
\EndFor
\State $\varphi \gets$ Pseudo-inverse$(f_0,f_1,t)$ \Comment{The function $\varphi$ is $f_0^{-1}(f_1(t))$}
\State w$\gets 0$
\For{$k \gets 1 \, , N$}
  \State $w \gets w + p_1(t_k) \,(\varphi(t_k) - t_k)^2 $
\EndFor
\State \textbf{return} $w$\Comment{The value of $\mathcal{T}(p_0,p_1)$ is $w$}
\EndProcedure
\end{algorithmic}
}
\end{algorithm}

\begin{algorithm}[tbh]
\caption{Calculation of $\frac{\partial}{\partial p_1}\mathcal{T}(p_0,p_1)$ }\label{alg:Frec-W2}
{\fontsize{12}{24}\selectfont
\begin{algorithmic}[1]
\Procedure{Integration-helper}{$(u, \varphi, t)$} \Comment{Calculation of $\int_u^1 (t-\varphi) \, \dd t $}
\State $m \gets n-1$
\State $S \gets 0 $
\For{$k \gets N\, , 1$}
  \State $\tilde{f} = u(t_k)$
  \While{$t_m > \tilde{f}$ and $m > 1$}
    \State $S \gets S + \frac{1}{2} \, ((t_m - \varphi(t_m)) + (t_{m+1} - \varphi(t_{m+1}))) (t_{m+1} - t_m)$
    \State $m \gets m-1$
  \EndWhile
  \If{$m=0$}
    \State $\xi(t_k) \gets S$
  \Else
    \State $\alpha \gets \frac{\tilde{f} - t_m}{t_{m+1} - t_m}$
    \State $\xi(t_k) \gets S + \frac{1}{2} ((1-\alpha)(t_m - \varphi(t_m)) + (1+\alpha)(t_{m+1} - \varphi(t_{m+1}))) \, (t_{m+1} - \tilde{f})$
  \EndIf
\EndFor
\State \textbf{return} $\xi$\Comment{$\xi(s) = \int_{u(s)}^1 (t-\varphi) \, \dd t $}
\EndProcedure

\Procedure{gradient}{$p_0,p_1,t$}\Comment{Calculation of $\frac{\partial}{\partial p_1}\mathcal{T}(p_0,p_1)$}
\For{$k \gets 1 \, , N$}
  \State $f_0(t_k) \gets f_0(t_{k-1}) + p_0(t_k)$
  \State $f_1(t_k) \gets f_1(t_{k-1}) + p_1(t_k)$
\EndFor
\State $\varphi_0 \gets$ Pseudo-inverse$(f_0,f_1,t)$ \Comment{$\varphi_0$ equals $f_0^{-1}(f_1(t))$}
\State $\varphi_1 \gets$ Pseudo-inverse$(f_1,f_0,t)$ \Comment{$\varphi_1$ equals $f_1^{-1}(f_0(t))$}
\State $\xi \gets $ Integration-helper$(\varphi_0, \varphi_1,t)$ \Comment{$\xi(t)$ equals $\int_{f_0^{-1}(f_1(t))}^1 \left(s- f_1^{-1}(f_0(s))\right) \, \dd s$}
\For{$k \gets 1 \, , N$}
  \State $\zeta(t_k) \gets (\varphi_0(t_k) - t_k )^2 + 2 \xi(t_k)$
\EndFor
\State \textbf{return} $\zeta$\Comment{$\zeta$ equals $\frac{\partial}{\partial p_1}\mathcal{T}(p_0,p_1)$}
\EndProcedure
\end{algorithmic}
}
\end{algorithm}

%   \section*{References}
\bibliographystyle{acm}
\bibliography{Ref}
\end{document}